\titleformat{\section}{\vskip10pt\large\bfseries}{\thesection.}{0.5em}{\centering\vspace{5pt}}
\titleformat{\subsection}{\vskip10pt\normalsize\bfseries}{\thesubsection.}{0.5em}{}
\newtheorem{theorem}{Theorem}[section]
\newtheorem{lemma}[theorem]{Lemma}
\newtheorem{remark}[theorem]{Remark}
\theoremstyle{definition}
\def\R{\mathbb{R}}
\def\Z{\mathbb{Z}}
\def\T{\mathbb{T}}
\def\C{\mathbb{C}}
\def\l{\langle}
\def\r{\rangle}
\newcommand{\fe}{\mathrm{e}}
\newcommand{\bZ}{{\mathbb Z}}
\numberwithin{equation}{section}
\begin{document}

\title[]{A fully discrete low-regularity integrator for the\\ 1D periodic cubic nonlinear Schr\"odinger equation}

\author[]{Buyang Li\,\,}
\address{\hspace*{-12pt}Buyang Li: 
Department of Applied Mathematics, The Hong Kong Polytechnic University,
Hung Hom, Hong Kong.}
\email{buyang.li@polyu.edu.hk}

\author[]{\,\,Yifei Wu}
\address{\hspace*{-12pt}Yifei Wu: Center for Applied Mathematics, Tianjin University, 300072, Tianjin, P. R. China.}
\email{yerfmath@gmail.com}

\subjclass[2010]{65M12, 65M15, 35Q55}


\keywords{Nonlinear Schr\"{o}dinger equation, numerical solution, first-order convergence, low regularity, fast Fourier transform}

\maketitle

\begin{abstract}\noindent
A fully discrete and fully explicit low-regularity integrator is constructed for the one-dimensional periodic cubic nonlinear Schr\"odinger equation. The method can be implemented by using fast Fourier transform with $O(N\ln N)$ operations at every time level, and is proved to have an $L^2$-norm error bound of $O(\tau\sqrt{\ln(1/\tau)}+N^{-1})$ for $H^1$ initial data, without requiring any CFL condition, where $\tau$ and $N$ denote the temporal stepsize and the degree of freedoms in the spatial discretisation, respectively. 
\end{abstract}


\section{Introduction}\label{sec:introduction}

This article concerns the numerical solution of the cubic nonlinear Schr\"odinger (NLS) equation  
\begin{equation}\label{model}
 \left\{\begin{aligned}
& i\partial_tu(t,x)+\partial_{xx} u(t,x)
=\lambda|u(t,x)|^2u(t,x)
 &&\mbox{for}\,\,\, x\in\T\,\,\,\mbox{and}\,\,\, t\in(0,T] , \\
 &u(0,x)=u^0(x) &&\mbox{for}\,\,\,  x\in\T,
 \end{aligned}\right.
\end{equation}
on the one-dimensional torus $\T=(-\pi,\pi)$ with a nonsmooth initial value $u^0\in H^1(\T)$, where $\lambda=-1$ and $1$ are referred to as the focusing and defocusing cases, respectively. It is known that problem \eqref{model} is globally well-posed in $H^s(\T)$ for $s\ge 0$; see \cite{Bo}. 

The construction of numerical methods for the NLS equation and related dispersive equations with nonsmooth initial data has attracted much attention recently since the pioneering work of Ostermann \& Schratz \cite{Ostermann-Schratz-FoCM}, who introduced a low-regularity exponential-type integrator that could have first-order convergence in $H^{\gamma}(\T^d)$ for initial data $u^0\in H^{\gamma+1}(\T^d)$ and $\gamma>\frac{d}{2}$, 
where $d$ denotes the dimension of space. 
Before their work, the traditional regularity assumption for the NLS equation for a time-stepping method to have first-order convergence in $H^{\gamma}(\T^d)$ is $u^0\in H^{\gamma+2}(\T^d)$ for $\gamma\ge 0$ (losing two derivatives). This includes the Strang splitting methods \cite{ESS-2016,Lubich-2008}, the Lie splitting method \cite{Ignat-2011}, and classical exponential integrators \cite{Hochbruck-Ostermann-2010} (also see the discussion in \cite[p. 733]{Ostermann-Schratz-FoCM}). The finite difference methods \cite{Sanz-Serna1984,Wang2014} generally require more regularity of the initial data (one temporal derivative on the solution generally requires the initial data to have two spatial derivatives to satisfy certain compatibility conditions). 


The idea of Ostermann \& Schratz \cite{Ostermann-Schratz-FoCM} is to use twisted variable to reduce the consistency error in an exponential-type integrator, and to use harmonic analysis techniques to approximate the exponential integral. 
More recently, Wu \& Yao \cite{Wu-Yao-2020} applied different harmonic analysis techniques to construct a time-stepping method for the one-dimensional NLS equation with first-order convergence in $H^\gamma(\T)$ for initial data $u^0\in H^\gamma(\T)$ and $\gamma>\frac32$ (without losing any derivative). 
Ostermann, Rousset \& Schratz furthermore weakened the regularity assumption of initial data to $u^0\in H^1(\T)$ in \cite{Ostermann-Schratz-FoCM2} and $u^0\in H^s(\T)$ with $s\in(0,1]$ in \cite{Ostermann-Rousset-Schratz-JEMS} by using estimates in the discrete Bourgain spaces. For $u^0\in H^1(\T)$ these methods were proved to have $L^2$-norm error bounds of $O(\tau^{\frac{5}{6}})$ and $O(\tau^{\frac{7}{8}-\epsilon})$, respectively, for the one-dimensional NLS equation. 
A general framework of low-regularity integrators for nonlinear parabolic, dispersive and hyperbolic equations was introduced in \cite{Rousset-Schratz-2020}, where the condition for the numerical solution of the NLS equation to have first-order convergence in $L^2(\T)$ is $u^0\in H^{\frac54}(\T)$. 

%
%

Besides the NLS equation, the techniques of twisted variable and harmonic analysis techniques were also used in the construction of low-regularity integrators for other dispersive equations; see  \cite{Hofmanova-Schratz-2017, ostermann-su, diraclow, WuZhao-1, wu} and the references therein. 

As far as we know, the analysis of all the low-regularity integrators for the NLS equation are limited to semidiscretisation in time (the error from spatial discretisation is unknown for nonsmooth initial data), and the regularity condition for the time-stepping method to have first-order convergence is $u^0\in H^\gamma(\T)$ for $\gamma\ge\frac54$. 
We are only aware of a fully discrete Lawson-type exponential integrator for the Korteweg–de Vries equation \cite{ostermann-su}, with first-order convergence in $L^2(\T)$ in both time and space under a CFL condition $\tau=O(h)$ for solutions in $C(0, T; H^3(\T))$. 

The objective of this article is to construct a fully discrete and fully explicit lower-regularity integrator that has first-order convergence (up to a logarithmic factor) in both time and space for $H^1$ initial data. 
The temporal low-regularity integrator is constructed using twisted variables and with different harmonic analysis techniques in approximating the low- and high-frequency parts of the functions in the exponential integral. 
The spatial discretisation is integrated in the temporal low-regularity integrator by repeatedly using frequency truncation and Fast Fourier transform (FFT) techniques in every nonlinear operation (i.e., computing the product of two functions). By using a $(4N+1)$-point FFT for every product of two $(2N+1)$-term Fourier series in the numerical scheme and then truncating the obtained $(4N+1)$-term product series to $(2N+1)$-term again, we avoid generating trigonometric interpolation errors from using FFT. As a result, the spatial discretisation error of our method is purely due to frequency truncation and therefore can be analysed together with the temporal discretisation error in the frequency domain by using harmonic analysis techniques. 



The rest of this article is organised as follows. The fully discrete low-regularity integrator and the main theorem on the convergence rates of the method are presented in section \ref{section:result}. 
Some technical tools of harmonic analysis are presented in section \ref{section:tool}, which are used in section \ref{section:construction} in the construction of the numerical method and analysis of the consistency error. 
The error bound of proposed fully discrete low-regularity integrator is proved in section \ref{section:proof} by utilizing the consistency error bounds obtained in section \ref{section:construction} and the stability of the method, as well as the $H^1$-regularity of fully discrete numerical solution. The latter is proved to be bounded uniformly with respect to the temporal stepsize and the number of Fourier terms in the spatial discretisation. 
Numerical results are presented in section \ref{sec:numerical} to support the theoretical analysis in this article. 

\section{The numerical method and main theoretical result}
\label{section:result}

It is known that the solution of the NLS equation satisfies the following two conservation laws 
 (see e.g., \cite{Ca-book-03}):
\begin{enumerate}
\item
Mass conservation: 
\begin{align}
\frac1{2\pi}\int_\T |u(t,x)|^2\,d x = \frac1{2\pi}\int_\T |u^0(x)|^2\,d x  \quad\mbox{for}\,\,\, t>0 .  \label{mass}
\end{align}

\item
Momentum conservation: 
\begin{align}
\frac1{2\pi}\int_\T u(t,x)\partial_x\bar u(t,x)\,d x = \frac1{2\pi}\int_\T u^0\,\partial_x\bar u^0 \,d x   \quad\mbox{for}\,\,\, t>0 .  \label{momentum}
\end{align}
\end{enumerate}
These two conserved quantities will be approximated based on the initial data and utilized in the construction of the numerical method. 

We denote by $\Pi_0$ and $\Pi_{\ne 0}$ the zero-mode and nonzero-mode operators, respectively, defined by 
\begin{align}\label{def:ii}
\Pi_0 f =\frac1{2\pi} \int_\T f(x)\,dx
\quad\mbox{and}\quad 
\Pi_{\ne 0}(f)=\sum\limits_{k\in\Z, k\ne 0} e^{ i  kx } \hat{f}_k .
\end{align}
Then the conserved mass and momentum are denoted by 
\begin{align}\label{M+P}
M= \frac1{2\pi}\int_\T |u^0(x)|^2\,d x = \Pi_0(|u^0|^2) 
\quad\mbox{and} \quad
P =\frac1{2\pi} \int_{\T} u^0 \partial_x\overline{u^0}\,d x=\Pi_0\big(u^0\partial_x \overline{u^0}\big) , 
\end{align}
respectively.

For any positive integer $N$, we denote by $I_{2N}$ the $(4N+1)$-point trigonometric interpolation operator, which can be obtained through the discrete Fourier transform (see \cite{Chu-2008,DFT-wiki})
\begin{align}\label{def:iiN} 
I_{2N} f (x) =  \sum\limits_{k=-2N}^{2N} e^{i kx } \tilde{f}_k 
\quad\mbox{with}\quad
\tilde{f}_k  = \frac{1}{4N+1} \sum\limits_{n=-2N}^{2N} e^{- i k x_n }  f(x_n) 
\end{align} 
where 
$$
x_n = \frac{2\pi n}{ 4N +1 } \quad\mbox{for}\,\,\, n=-2N,\cdots, 2N . 
$$
If the Fourier coefficient $\hat f_k$ of the function $f$ satisfies that $\hat f_k=0$ for $|k|>2N$, then $I_{2N}f=f$ and therefore $\tilde f_k=\hat f_k$ in the formula \eqref{def:iiN}. In this case, both 
\begin{align}\label{def:iiN2} 
f(x_n) = \sum\limits_{k=-2N}^{2N} e^{i kx_n } \hat{f}_k , \quad n=-2N,\cdots,2N,
\end{align} 
and
\begin{align}\label{def:iiN3} 
\hat{f}_k  = \sum\limits_{n=-2N}^{2N} e^{- i k x_n }  f(x_n) \quad k=-2N,\cdots,2N,
\end{align} 
can be computed with cost $O(N\ln N)$ by using the fast Fourier transform (FFT); see \cite{Chu-2008}. 

Let $S_N$ be the subspace of functions $f\in L^2(\T)$ such that $\hat f_k=0$ for $|k|>N$. 
If $w,v\in S_N$ and their Fourier coefficients $\hat w_k$ and $\hat v_k$, $k=-2N,\cdots,2N$, are stored in the computer (with $\hat w_k=\hat v_k=0$ for $N<|k|\le 2N$), then the values $w(x_n)$ and $v(x_n)$, $n=-2N,\dots,2N$, can be computed exactly by using \eqref{def:iiN2} and FFT. 
Since $(wv)_k=0$ for $|k|>2N$, it follows that $wv=I_{2N}(wv)$. If we denote by $\mathcal{F}_k[v]$ the $k$th Fourier coefficient of the function $v$, then 
$$ 
\mathcal{F}_k[wv] 
= 
\sum\limits_{n=-2N}^{2N} e^{- i k x_n }  w(x_n) v(x_n) ,\quad
k=-2N,\dots,2N, 
$$ 
which can also be computed exactly by using FFT. 
Therefore, if we denote by $\Pi_N: L^2(\T)\rightarrow L^2(\T)$ the projection operator defined by 
$$
\mathcal{F}_k[\Pi_N f]=
\left\{
\begin{aligned}
&\hat f_k &&\mbox{for}\,\,\, |k|\le N, \\
&0 &&\mbox{for}\,\,\, |k|> N ,
\end{aligned}
\right.
$$
then the cost of computing the Fourier coefficients of $\Pi_N(wv)\in S_N$ from the Fourier coefficients of $w,v\in S_N$ is $O(N\ln N)$. 


For any positive integer $L$, let $t_n=n\tau$, $n=0,1,\dots,L$, be a partition of the time interval $[0,T]$ with stepsize $\tau=T/L$. The fully discrete low-regularity integrator for the NLS equation \eqref{model} to be constructed in this paper is: For given $u_{\tau,N}^n\in S_N$ compute $u^{n+1}_{\tau,N}\in S_N$ by 
\begin{align}\label{numerical}
\begin{aligned}
&u^{n+1}_{\tau,N}=\Psi(u^n_{\tau,N}) \quad\mbox{for}\,\,\, n=0,1\ldots,L-1 , \\
&\mbox{with $u^0_{\tau,N} = \Pi_N I_{2N}u^0 \in S_N$} ,  
\end{aligned}
\end{align}
where 
\begin{align}\label{psi}
\Psi(f) 
:=\, 
& \fe^{i\tau(-2\lambda P_N\partial_x^{-1}-2\lambda M_N+ \partial_x^2)}f
+ (1-\fe^{-2i\lambda\tau M_N}) \Pi_0 f 
-i\lambda \tau \Pi_0\big[ \Pi_N(|f|^2)f\big]
\notag\\
 & + \lambda \partial_x^{-1}\Pi_N
          \big[ (\fe^{i\tau\partial_x^2} f)
              \cdot \partial_x^{-1}\Pi_N
                (|\fe^{i\tau\partial_x^2} f|^2 ) \big]
                -\lambda \fe^{i\tau\partial_x^2} \partial_x^{-1}\Pi_N
          \big[ f  \cdot \partial_x^{-1} \Pi_N (|f |^2 ) \big]\notag\\     
 & -\frac{\lambda}{2} \Big[ \partial_x^{-2}\Pi_N
             \Big((\fe^{-i\tau\partial_x^2}\bar f\, )
              \, \fe^{i\tau\partial_x^2}\Pi_N (f^2) \Big)
              - \fe^{i \tau\partial_x^2} \partial_x^{-2}\Pi_N
          \Big( \bar f\, \Pi_N(f^2) \Big) \Big] \notag \\ 
 & -\frac{\lambda}{2}
\fe^{i \tau\partial_x^2} \partial_x^{-1}\Pi_N
          \Big[\partial_x \bar f \Big(
              \,  \fe^{-i\tau\partial_x^2} \Pi_N\big[
                (\fe^{i \tau \partial_x^2}\partial_x^{-1} f )^2\big]    - \Pi_N \big[ (\partial_x^{-1} f)^2\big] \Big)  \Big] \notag\\
&
-i\lambda\tau \fe^{i \tau \partial_x^2}\partial_x^{-1} \Pi_N
\big( \partial_x\bar f \, \Pi_N(f^2)\big) \notag \\ 
& +2i\lambda \tau \Pi_0 f\, \fe^{i \tau\partial_x^2} \partial_x^{-1} \Pi_N \left(\partial_x\bar f \, f\right)
-i\lambda\tau (\Pi_0 f)^2 \fe^{i \tau\partial_x^2} \Pi_{\ne 0} \bar f 
\qquad\mbox{for}\,\,\, f\in S_N ,  
\end{align}
and
\begin{align}\label{M_N+P_N}
M_N & 
= \Pi_0(|u^0_{\tau,N}|^2)
\quad\mbox{and}\quad 
P_N 
=\Pi_0\big(u^0_{\tau,N}\partial_x \overline{u^0_{\tau,N}}\big) 
\end{align}
are the approximate mass and momentum, respectively. 
By using \eqref{def:iiN} with FFT, the initial value $u^0_{\tau,N} =\Pi_N I_{2N}u^0$ can be obtained with cost $O(N\ln N)$. Then, at every time level, the method only requires computing several functions in the following forms:
\begin{align*}
&\mbox{\footnotesize$\bullet$}\quad \fe^{i\tau\left(-2M_N-2P_N\partial_x^{-1}+\partial_x^2\right)} f
,\,\,\,
\fe^{\pm i\tau\partial_x^2} f
\,\,\,\mbox{and}\,\,\,
\partial_x^{-1} f 
\,\,\,\mbox{for some given function}\,\,\, f \in S_N ,  \\
&\mbox{\footnotesize$\bullet$}\,\,\,\Pi_N(fg) \,\,\,
\mbox{for some given functions}\,\,\, f, g \in S_N , 
\end{align*}
where  
\begin{align*}
& 
\mathcal{F}_k [ \fe^{i\tau\left(-2M_N-2P_N\partial_x^{-1}+\partial_x^2\right)} f]
= \left\{
\begin{aligned}
& \fe^{-2M_N i\tau } \hat f_0 &&\mbox{for}\,\,\, k=0,\\ 
&\fe^{i\tau\left(-2M_N-2P_N(ik)^{-1}- k^2 \right)} \hat f_k&&\mbox{for}\,\,\, k\neq 0.
\end{aligned}
\right. \\
&\mathcal{F}_k [\fe^{\pm i\tau\partial_x^2}f]
=  \fe^{\mp i\tau k^2} \hat f_k 
\quad\mbox{and}\quad
\partial_x^{-1} f
=  \left\{
\begin{aligned}
& 0  &&\mbox{for}\,\,\, k=0,\\ 
&  (ik)^{-1} \hat f_k &&\mbox{for}\,\,\, k\neq 0.
\end{aligned}
\right.
\end{align*}
Hence, the computational cost is $O(N\ln N)$ at every time level. 


%
%

%

The main theoretical result of this paper is the following theorem.

\begin{theorem}\label{main:thm1}
If $u^0\in H^{1}(\T)$ then there exist positive constants $\tau_0$, $N_0$ and $C$ such that for $\tau\leq\tau_0$ and $N\ge N_0$ the numerical solution given by \eqref{numerical}--\eqref{psi} has the following error bound:
\begin{equation}\label{error-N-1}
\max_{1\le n\le L}  \|u(t_n,\cdot)-u^{n}_{\tau,N}\|_{L^2}
  \le C \big(\tau\sqrt{\ln (1/\tau) } +N^{-1}\big) ,
\end{equation} 
where the constants $\tau_0$, $N_0$ and $C$ depend only on $T$ and $\|u^0\|_{H^1}$. 
\end{theorem}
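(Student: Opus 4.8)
The plan is to run a standard consistency--stability argument in $L^2$, the one genuinely hard input being a uniform $H^1$ bound on the numerical solution. First I would split the error as
\[
\|u(t_n,\cdot)-u^n_{\tau,N}\|_{L^2}\le\|(1-\Pi_N)u(t_n,\cdot)\|_{L^2}+\|e^n\|_{L^2},\qquad e^n:=\Pi_N u(t_n,\cdot)-u^n_{\tau,N}\in S_N,
\]
bounding the first term by $CN^{-1}\|u(t_n,\cdot)\|_{H^1}\le CN^{-1}$ from frequency truncation and the $H^1$-boundedness of the exact solution (global well-posedness in $H^1$). The initial contribution $e^0=\Pi_N u^0-\Pi_N I_{2N}u^0$ is controlled by the interpolation error $\|u^0-I_{2N}u^0\|_{L^2}\le CN^{-1}\|u^0\|_{H^1}$, so $\|e^0\|_{L^2}\le CN^{-1}$. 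It then remains to propagate $\|e^n\|_{L^2}$.

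From the construction in section~\ref{section:construction} I would extract a local defect $\mathcal E^n$ with
\[
\Pi_N u(t_{n+1},\cdot)=\Psi\big(\Pi_N u(t_n,\cdot)\big)+\mathcal E^n,\qquad \|\mathcal E^n\|_{L^2}\le C\tau\big(\tau\sqrt{\ln(1/\tau)}+N^{-1}\big),
\]
the logarithm originating from the dyadic summation of the high-frequency oscillatory integrals over the $O(\ln(1/\tau))$ frequency scales below $O(\tau^{-1})$. Subtracting $u^{n+1}_{\tau,N}=\Psi(u^n_{\tau,N})$ gives $e^{n+1}=\Psi(\Pi_N u(t_n,\cdot))-\Psi(u^n_{\tau,N})+\mathcal E^n$. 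The stability step is the $L^2$-Lipschitz bound
\[
\|\Psi(f)-\Psi(g)\|_{L^2}\le(1+C\tau)\|f-g\|_{L^2}\qquad\text{for }f,g\in S_N,
\]
with $C$ depending only on $\max(\|f\|_{H^1},\|g\|_{H^1})$: the modified propagator $\fe^{i\tau(-2\lambda P_N\partial_x^{-1}-2\lambda M_N+\partial_x^2)}$ is an $L^2$-isometry, while each remaining term of $\Psi$ carries a factor $\tau$ and is multilinear in $f$, built from pointwise products, the isometries $\fe^{\pm i\tau\partial_x^2}$ and the bounded operators $\partial_x^{-1},\partial_x^{-2},\Pi_N$, so differencing and using that $H^1(\T)\hookrightarrow L^\infty(\T)$ is a Banach algebra bounds these by $C\tau\|f-g\|_{L^2}$ times a polynomial in the $H^1$ norms. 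Granting the uniform $H^1$ bound below, a discrete Gronwall inequality applied to $\|e^{n+1}\|_{L^2}\le(1+C\tau)\|e^n\|_{L^2}+\|\mathcal E^n\|_{L^2}$ over the $L=T/\tau$ steps yields $\max_n\|e^n\|_{L^2}\le \fe^{CT}(\|e^0\|_{L^2}+L\max_n\|\mathcal E^n\|_{L^2})\le C(\tau\sqrt{\ln(1/\tau)}+N^{-1})$, which with the truncation term gives \eqref{error-N-1}.

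The crux, which makes the Lipschitz constant uniform, is the bound $\sup_n\|u^n_{\tau,N}\|_{H^1}\le C$ independent of $\tau$ and $N$. A term-by-term $H^1$ estimate of $\Psi$ does go through --- the modified propagator is an $H^1$-isometry and the smoothing operators $\partial_x^{-1},\partial_x^{-2}$ precisely absorb the derivative losses in terms such as $\partial_x\bar f\,\Pi_N(f^2)$ --- but it only produces $\|u^{n+1}_{\tau,N}\|_{H^1}\le\|u^n_{\tau,N}\|_{H^1}+C\tau\,(1+\|u^n_{\tau,N}\|_{H^1})^3$, whose comparison ODE blows up in finite time; the superlinear growth must be cancelled rather than estimated, and I expect this to be the main obstacle. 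My plan is to exploit two structural features of $\Psi$: first, the resonant (secular) part of the cubic interaction has been folded into the unitary factor through $M_N$ and $P_N$, so it produces no $H^1$ growth; second, the remaining terms occur in telescoping pairs of the form $A-\fe^{i\tau\partial_x^2}A'$ visible in \eqref{psi}, whose contribution to the increment of $\|u^n_{\tau,N}\|_{H^1}^2$ I expect to be one order higher in $\tau$ than a naive count suggests. Combining these with the $L^2$ bound from approximate mass conservation and with the $L^2$ error estimate of the previous paragraph, I would close a bootstrap: under the induction hypothesis $\|u^j_{\tau,N}\|_{H^1}\le 2\sup_{t\in[0,T]}\|u(t,\cdot)\|_{H^1}$ for $j\le n$, the genuinely nonlinear $H^1$ increment is controlled by the exact solution's $H^1$ norm rather than by $\|u^n_{\tau,N}\|_{H^1}^3$, returning the same bound at step $n+1$ and continuing the induction. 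Verifying that no secular $H^1$ growth survives the cancellations in $\Psi$ is the technical heart; once it is in place, the consistency, $L^2$-stability and Gronwall steps are routine.
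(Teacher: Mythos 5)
Your overall architecture (truncation error $+$ consistency $+$ $L^2$-stability $+$ Gronwall, with a uniform $H^1$ bound on the numerical solution as the decisive input) matches the paper's, and you correctly identify the uniform $H^1$ bound as the crux. But precisely there your proposal has a genuine gap: the mechanism you propose for closing it --- cancellation of secular growth via the resonant part being absorbed into the unitary factor, and telescoping pairs $A-\fe^{i\tau\partial_x^2}A'$ contributing one order higher in $\tau$ --- is left entirely unverified (``I expect'', ``the technical heart''), and it is not how the bound actually closes. The non-resonant cubic terms in $\Psi$ do not telescope to $O(\tau^2)$ in $H^1$; each is genuinely $O(\tau)$ with cubic dependence on the solution, so your bootstrap hypothesis $\|u^j_{\tau,N}\|_{H^1}\le 2\sup_t\|u(t)\|_{H^1}$ only returns $\|u^{n}_{\tau,N}\|_{H^1}\le\|u^0\|_{H^1}+CT(2K)^3$ after $L$ steps, which need not be $\le 2K$; the induction does not close. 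The paper's resolution is different and should be what you aim for: first prove an \emph{intermediate error estimate in $H^s$ for a fixed $s\in(\frac12,1)$}, namely $\|v(t_n)-v^n\|_{H^s}\lesssim\tau^{1/2}+N^{-1+s}$ (Lemma \ref{lem:Apriori}), using the $H^s$ consistency bounds $O(\tau^{3/2}+\tau N^{-1+s})$ from \eqref{est:R1}--\eqref{est:R6} together with the bilinear estimate of Lemma \ref{lem:kato-Ponce-1}(i); this gives $\sup_n\|v^n\|_{H^s}\lesssim1$ \emph{before} any $H^1$ control. With that in hand, the cubic terms in the $H^1$ increment can be estimated with one factor in $H^1$ and two factors in $H^s$, so the recursion becomes linear, $\|v^{n+1}\|_{H^1}\le(1+C\tau)\|v^n\|_{H^1}$, and Gronwall closes without any cancellation. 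Without this (or some substitute you actually prove), your argument is incomplete at its central step.

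A secondary under-justification: your $L^2$-Lipschitz bound for $\Psi$ cannot be obtained from ``$H^1\hookrightarrow L^\infty$ is a Banach algebra'' alone. Several terms of $\Psi$ carry unbounded Fourier multipliers of the type $(k_1+k_2)/k$ (equivalently, a structure $J^{-1}(Jf\,g)$), and to get $\|\Psi(f)-\Psi(g)\|_{L^2}\le(1+C\tau)\|f-g\|_{L^2}$ with $C$ depending only on $H^1$ norms you need the commutator-type estimate $\|J^{-1}(Jf\,g)\|_{L^2}\lesssim\min\{\|f\|_{L^2}\|g\|_{H^1},\|f\|_{H^1}\|g\|_{L^2}\}$ of Lemma \ref{lem:kato-Ponce-1}(ii), applied so that the $L^2$ norm always falls on the difference factor. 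This is fixable but must be said; as written the stability step is not substantiated for those terms.
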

  
The rest of this paper is devoted to the construction of the method \eqref{numerical}--\eqref{psi} and the proof of Theorem \ref{main:thm1}. 

\begin{remark}\label{Remark}
{\upshape
The analysis in this article can be easily extended to proving higher-order convergence of the spatial discretisation method when the initial data is smoother. Namely, for $u^0\in H^{s}(\T)$ with $s>1$, the error bound of of the proposed method should become
\begin{equation}\label{error-N-s}
\max_{1\le n\le L}  \|u(t_n,\cdot)-u^{n}_{\tau,N}\|_{L^2}
  \le C \big(\tau +N^{-s}\big) . 
\end{equation} 
The proof of this result (with smoother initial data) is easier than the proof of Theorem \ref{main:thm1} and therefore omitted. The convergence results in \eqref{error-N-1} and \eqref{error-N-s} are illustrated by the numerical experiments in section \ref{sec:numerical} for $s=1$ and $s=2$, respectively.
}
\end{remark}

\section{Notation and technical tools}
\label{section:tool}

In this section we introduce the basic notation and technical lemmas to be used in analysing the error of the numerical method to be constructed. 

\subsection{Notation}\label{subsec1}
The inner product and norm on $L^2(\T)$ are denoted by  
$$
( f,g ) = 
\int_\T f(x) \overline{g(x)}\,dx
\quad\mbox{and}\quad
\|f\|_{L^2}= \sqrt{( f,f )} ,\,\,\,\mbox{respectively}. 
$$
The norm on the Sobolev space $H^s(\T)$, $s\in\R$, is denoted by 
$$
\big\|f\big\|_{H^s}^2
=2\pi\sum_{k\in \Z}(1+ k ^2)^s |\hat{f}_k|^2 . 
$$
For a function $f:[0,T]\times\T\rightarrow \C$ we denote by $\|f\|_{L^p(0,T;H^s)}$ its space-time Sobolev norm, defined by 
$$
\|f\|_{L^p(0,T;H^s)}
= 
\left\{\begin{aligned}
&\bigg(\int_0^T \|f(t)\|_{H^s}^p d t\bigg)^{\frac{1}{p}} && \mbox{for}\,\,\, p\in[1,\infty) ,\\[5pt] 
&{\rm ess\!\!}\sup \limits_{t\in[0,T]\,\,\,}\!\! \|f(t)\|_{H^s} && \mbox{for}\,\,\, p=\infty. 
\end{aligned}\right. 
$$

The Fourier coefficients of a function $f$ on $\T$ are denoted by $\mathcal{F}_k[f]$ or simply $\hat{f}_k$, defined by 
$$
\hat{f}_k = \frac{1}{2\pi}\int_{\T} e^{- i   kx }f( x )\,d x \quad\mbox{for}\,\,\, k\in\bZ .
$$
The Fourier inversion formula is given by 
$$
f( x )=\sum_{k\in \Z} e^{ i  kx } \hat{f}_k.
$$
The Fourier coefficients are known to have the following properties:
\begin{align*}
\begin{aligned}
\|f\|_{L^2}^2
 & =2\pi \sum\limits_{k\in \Z}\big|\hat{f_k}\big|^2 && \mbox{(Plancherel identity)}; \\
\mathcal{F}_k[fg]  &=\sum\limits_{k_1\in\Z}
  \hat{f}_{k - k _1}\hat{g}_{k _1}  && \mbox{(Convolution)}.
  \end{aligned}
\end{align*}

For any function $\sigma:\Z\rightarrow\C$ such that $|\sigma(k)|\le C_\sigma(1+|k|)^m$ for some constants $C_\sigma$ and $m\ge 0$, we denote by $\sigma(i^{-1}\partial_x):H^s(\T)\rightarrow H^{s-m}(\T)$ the operator defined by 
$$
\sigma(i^{-1}\partial_x)f = \sum_{k\in \Z} \sigma(k) \hat f_k e^{ikx} . 
$$
For abbreviation, we denote 
$$\l k \r= (1+k^2)^{\frac{1}{2}} \quad\mbox{and}\quad J^s=  \l i^{-1} \partial_x \r^s ,$$
which imply that 
$$
\big\|f\big\|_{H^s}^2=\big\|J^sf\big\|_{L^2}^2 \quad\mbox{and}\quad
\widehat{J^s f } _k = \l k \r^s \hat f_k . 
$$
Moreover, we denote by $\partial_x^{-1}:H^s(\T)\rightarrow H^{s+1}(\T)$, $s\in\R$, the operator such that 
\begin{equation}\label{def:px-1}
\mathcal{F}_k[\partial_x^{-1}f]
=\Bigg\{ \aligned
    &(i k )^{-1}\hat{f}_k,\quad &\mbox{when }  k \ne 0,\\
    &0,\quad &\mbox{when }  k = 0.
   \endaligned
\end{equation}

We denote by $A\lesssim B$ or $B\gtrsim A$ the statement 
$A\leq CB$ for some constant $C>0$.  
The value of $C$ may depend on $T$ and $\|u^0\|_{H^1}$, and may be different at different occurrences, but is always independent of $\tau$, $N$ and $n$. 
The notation $A\sim B$ means that $A\lesssim B\lesssim A$. 

We denote by $O(Y)$ any quantity $X$ such that $X\lesssim Y$. 
For any function $\sigma:\Z^{m+1}\rightarrow\C$ and $w\in H^1(\T)$ we denote by $\mathcal T_m(\sigma; w)$ the class of functions $f\in L^2(\T)$ such that 
\begin{align}\label{def:TmM}
\hat{f}_k \lesssim 
\sum\limits_{ k _1+\cdots+ k _m = k}
| \sigma( k, k _1,\cdots, k _m)|\>|\hat{w}_{ k _1}| \cdots |\hat w_{ k _m}|  
\quad\forall\, f\in \mathcal T_m(\sigma; w). 
\end{align}
If $F=\int_{t_1}^{t_2} f(t)dt$ for some function $f(t)\in \mathcal T_m(\sigma; v(t))$, then we simply denote 
\begin{align}\label{def:TmM2}
F\in \int_{t_1}^{t_2}   \mathcal T_m(\sigma; v(t)) dt . 
\end{align}

\subsection{Two technical lemmas}\label{subsec3}
We will use the following version of the Kato--Ponce inequalities, which was originally proved 
 in \cite{Kato-Ponce}
and subsequently improved to cover the endpoint case in 
\cite{BoLi-KatoPonce, Li-KatoPonce}.

\begin{lemma}[The Kato--Ponce inequalities] \label{lem:kato-Ponce} 
$\,$
\begin{itemize}
  \item[(i)]
If $ s>\frac 12$ and $f,g\in H^{s}(\T)$ then
\begin{align*}
\|fg\|_{H^s}\lesssim \|f\|_{H^s}\|g\|_{H^{s}}.
\end{align*}
  \item[(ii)]
If $s\ge 0, s_1>\frac 12$, $f\in H^{s+s_1}(\T)$ and $g\in H^{s}(\T)$, then
\begin{align*}
\|fg\|_{H^s} \lesssim \|f\|_{H^{s+s_1}}\|g\|_{H^{s}}.
\end{align*}
\end{itemize}
\end{lemma}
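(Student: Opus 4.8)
\emph{Proof proposal.} The plan is to argue entirely in the frequency domain, using only the Plancherel identity, the convolution formula for the Fourier coefficients of a product, and Young's inequality for discrete convolutions. By Plancherel and the convolution formula,
\begin{equation*}
\|fg\|_{H^s}^2 \,\sim\, \sum_{k\in\Z}\l k\r^{2s}\,\Big|\sum_{k_1\in\Z}\hat f_{k-k_1}\hat g_{k_1}\Big|^2 .
\end{equation*}
The elementary bound $|k|\le|k-k_1|+|k_1|$ gives $\l k\r\lesssim\max(\l k-k_1\r,\l k_1\r)$, hence $\l k\r^s\lesssim\l k-k_1\r^s+\l k_1\r^s$ for every $s\ge0$. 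Splitting the weight accordingly,
\begin{equation*}
\l k\r^s\big|\mathcal F_k[fg]\big|
\,\lesssim\,
\underbrace{\sum_{k_1}\l k-k_1\r^s|\hat f_{k-k_1}|\,|\hat g_{k_1}|}_{=:A_k}
+\underbrace{\sum_{k_1}|\hat f_{k-k_1}|\,\l k_1\r^s|\hat g_{k_1}|}_{=:B_k}.
\end{equation*}
Both $A_k$ and $B_k$ are discrete convolutions, so it remains to estimate $\|A_k\|_{\ell^2_k}$ and $\|B_k\|_{\ell^2_k}$ by Young's inequality, choosing in each case which factor to place in $\ell^1$ and which in $\ell^2$.

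For part (i), I would bound $A_k$ using $\|a*b\|_{\ell^2}\le\|a\|_{\ell^2}\|b\|_{\ell^1}$ with $\l\cdot\r^s\hat f$ in $\ell^2$ and $\hat g$ in $\ell^1$, which gives $\|A_k\|_{\ell^2_k}\lesssim\|f\|_{H^s}\,\|\hat g\|_{\ell^1}$. The hypothesis $s>\tfrac12$ enters precisely here: by Cauchy--Schwarz, $\|\hat g\|_{\ell^1}\le\big(\sum_k\l k\r^{-2s}\big)^{1/2}\|g\|_{H^s}\lesssim\|g\|_{H^s}$, the weight $\l\cdot\r^{-s}$ being square-summable exactly when $2s>1$. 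The term $B_k$ is treated identically with the roles of $f$ and $g$ exchanged, and combining the two estimates yields $\|fg\|_{H^s}\lesssim\|f\|_{H^s}\|g\|_{H^s}$.

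For part (ii), where $g$ may lie only in $H^s$ with $s$ arbitrarily small, the sequence $\hat g$ need not be summable, so I would instead always place the factor coming from the smoother function $f$ in $\ell^1$. Using $\|a*b\|_{\ell^2}\le\|a\|_{\ell^1}\|b\|_{\ell^2}$, this gives $\|A_k\|_{\ell^2_k}\lesssim\|\l\cdot\r^s\hat f\|_{\ell^1}\,\|g\|_{L^2}$ and $\|B_k\|_{\ell^2_k}\lesssim\|\hat f\|_{\ell^1}\,\|g\|_{H^s}$. Writing $\l k\r^s=\l k\r^{-s_1}\l k\r^{s+s_1}$ and $1=\l k\r^{-(s+s_1)}\l k\r^{s+s_1}$ and applying Cauchy--Schwarz, both $\l\cdot\r^s\hat f$ and $\hat f$ are controlled in $\ell^1$ by $\|f\|_{H^{s+s_1}}$, since $\sum_k\l k\r^{-2s_1}<\infty$ (as $s_1>\tfrac12$) and $\sum_k\l k\r^{-2(s+s_1)}<\infty$ (as $s+s_1>\tfrac12$). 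Adding the two bounds gives the assertion.

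I expect no genuine obstacle here, because the exponents in the statement are strict; in particular the delicate endpoint versions established in \cite{BoLi-KatoPonce, Li-KatoPonce} are not required for this elementary argument. The only point demanding care is the bookkeeping of which factor to send to $\ell^1$: in part (ii) the extra $s_1>\tfrac12$ derivatives on $f$ are exactly what makes $\l\cdot\r^s\hat f$ summable while only $L^2$ control of $g$ is available, whereas in part (i) the common regularity $s>\tfrac12$ makes the weight $\l\cdot\r^{-s}$ square-summable and allows the symmetric treatment.
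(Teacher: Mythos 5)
Your argument is correct. Note, however, that the paper does not prove Lemma \ref{lem:kato-Ponce} at all: it is quoted as a known result, with the proof delegated to the cited references \cite{Kato-Ponce, BoLi-KatoPonce, Li-KatoPonce}, which establish much more general $L^p$-based fractional Leibniz rules including delicate endpoint cases. What you supply instead is a self-contained elementary proof tailored to the periodic $L^2$-based setting with strict inequalities $s>\frac12$, $s_1>\frac12$, and every step checks out: the triangle-inequality splitting $\langle k\rangle^s\lesssim\langle k-k_1\rangle^s+\langle k_1\rangle^s$ (valid for $s\ge0$), Young's inequality $\ell^1*\ell^2\subset\ell^2$, and the Cauchy--Schwarz step $\|\hat g\|_{\ell^1}\lesssim\|g\|_{H^s}$ for $s>\frac12$ (respectively $\|\langle\cdot\rangle^s\hat f\|_{\ell^1}\lesssim\|f\|_{H^{s+s_1}}$ for $s_1>\frac12$, and $\|\hat f\|_{\ell^1}\lesssim\|f\|_{H^{s+s_1}}$ since $s+s_1>\frac12$) are all standard and correctly deployed; the asymmetric placement of the $\ell^1$ norm on the smoother factor in part (ii) is exactly the right bookkeeping. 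What your route buys is independence from the heavier harmonic-analysis machinery of the references; what it gives up is generality (no $L^p$ scales, no endpoints), but none of that is needed for the statement as used in this paper. One cosmetic remark: the implicit constants you produce depend on $s$ (and $s_1$) through $\sum_k\langle k\rangle^{-2s}$, which blows up as $s\downarrow\frac12$; this is harmless here since the paper only ever applies the lemma with a fixed $s$.
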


In addition to Lemma \ref{lem:kato-Ponce} we also need the following results, which are consequences of the Kato--Ponce inequalities.
\begin{lemma}\label{lem:kato-Ponce-1} 
$\,$
\begin{itemize}
  \item[(i)]
If $s>\frac12$ and $f,g\in H^s(\T)$ then 
\begin{align*}
\|J^{-1}(Jf \, g)\|_{H^s}\lesssim  \|f\|_{H^s}\|g\|_{H^s}.
\end{align*}
\item[(ii)]
If $f,g \in H^1(\T)$ then 
\begin{align*}
\|J^{-1}(Jf \, g)\|_{L^2}\lesssim \min\big\{\|f\|_{L^2}\|g\|_{H^1},\|g\|_{L^2}\|f\|_{H^1}\big\}.
\end{align*}
\end{itemize}
\end{lemma}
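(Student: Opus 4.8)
The plan is to reduce both parts to the Kato--Ponce inequalities of Lemma \ref{lem:kato-Ponce} by treating $J^{-1}$ as an operator that ``reclaims'' the derivative spuriously created by the factor $Jf$. The essential algebraic identity I would exploit is that $J$ is a Fourier multiplier with symbol $\langle k\rangle\sim 1+|k|$, so for the high-frequency interactions the operator $J$ acting on the product $fg$ is, up to harmless lower-order terms, comparable to $J$ acting on the individual factors. Concretely, on the Fourier side the $k$th coefficient of $J^{-1}(Jf\,g)$ is
\begin{align*}
\mathcal{F}_k\big[J^{-1}(Jf\,g)\big]
= \langle k\rangle^{-1}\sum_{k_1+k_2=k}\langle k_1\rangle\,\hat f_{k_1}\,\hat g_{k_2},
\end{align*}
and the whole point is that $\langle k_1\rangle/\langle k\rangle$ is uniformly bounded whenever $\langle k_2\rangle\gtrsim\langle k_1\rangle$, while in the opposite regime the weight $\langle k_1\rangle/\langle k\rangle$ is controlled by $\langle k_2\rangle$. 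This is exactly the Leibniz-type splitting that the Kato--Ponce estimates are designed to handle.

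For part (i), I would apply $J^s$ to $J^{-1}(Jf\,g)$ and compare the symbol $\langle k\rangle^{s-1}\langle k_1\rangle$ against the two ``expected'' distributions of derivatives, namely $\langle k_1\rangle^s$ on $f$ or $\langle k_2\rangle^s$ on $g$ (with the extra derivative from $Jf$ absorbed). Splitting the frequency sum into the region $|k_1|\lesssim|k_2|$ and its complement, in the first region the multiplier $\langle k\rangle^{s-1}\langle k_1\rangle\lesssim\langle k_2\rangle^{s}$ lets me bound the contribution by $\|f\|_{L^2}\|g\|_{H^s}\lesssim\|f\|_{H^s}\|g\|_{H^s}$ after using that $s>\tfrac12$ gives an algebra (so $H^s$ controls $L^\infty$ and products stay in $H^s$). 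In the complementary region $\langle k\rangle^{s-1}\langle k_1\rangle\lesssim\langle k_1\rangle^{s}$ and the factor on $g$ is harmless, giving $\|f\|_{H^s}\|g\|_{H^s}$ by Lemma \ref{lem:kato-Ponce}(i). Summing the two regions yields the claimed bound. The cleaner route, which I would actually write down, is to note $J^{-1}(Jf\,g)=fg + J^{-1}([J,\cdot\,g]\,f)$ where the commutator term has the extra derivative effectively moved onto $g$; the first term is estimated by the algebra property and the commutator by a Kato--Ponce commutator estimate, both yielding $\|f\|_{H^s}\|g\|_{H^s}$.

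For part (ii), the target is an $L^2$ bound with an asymmetric (minimum) right-hand side, so I can no longer use the algebra property and must be more careful about which factor carries the single available derivative. Writing the Fourier coefficient as above with $s=0$, I split again into $|k_1|\le|k_2|$ and $|k_1|>|k_2|$. On $|k_1|\le|k_2|$ the weight $\langle k_1\rangle/\langle k\rangle\lesssim 1$ and also $\lesssim\langle k_2\rangle$, so Young's/Cauchy--Schwarz on the convolution gives a bound by $\|f\|_{L^2}\|g\|_{H^1}$. On $|k_1|>|k_2|$ I instead have $\langle k_1\rangle/\langle k\rangle\lesssim 1$ together with $\langle k_1\rangle\lesssim\langle k\rangle\langle k_2\rangle$ reversed, so I route the derivative onto $f$ and obtain $\|f\|_{H^1}\|g\|_{L^2}$. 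Taking the better of the two splittings on each region and using that the full sum is the sum of the two contributions produces each of the two members of the minimum separately, hence the minimum. The main obstacle will be justifying the asymmetric estimate uniformly: one has to verify that the weight $\langle k_1\rangle\langle k\rangle^{-1}$ is genuinely absorbed by a single power of $\langle k_2\rangle$ (and symmetrically by $\langle k_1\rangle$) with constants independent of $k$, which is where the elementary frequency inequalities $\langle k_1\rangle\lesssim\langle k\rangle\langle k-k_1\rangle$ must be invoked carefully rather than the blunt algebra property used in part (i).
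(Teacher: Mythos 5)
There is a genuine gap, and it sits exactly at the frequency interaction that makes this lemma nontrivial: the high--high--to--low regime where $k_1\approx -k_2$ are both large and the output frequency $k=k_1+k_2$ is small. Your splitting by $|k_1|\lesssim|k_2|$ versus its complement does not isolate this regime, and the pointwise multiplier bounds you claim on each piece are false there. Concretely, for part (i) take $k_1=n$, $k_2=-n-1$, $k=-1$: then $\langle k\rangle^{s-1}\langle k_1\rangle\sim n$ while $\langle k_2\rangle^{s}\sim n^{s}$, so the claimed bound $\langle k\rangle^{s-1}\langle k_1\rangle\lesssim\langle k_2\rangle^{s}$ fails for every $s<1$ (and symmetrically $\langle k\rangle^{s-1}\langle k_1\rangle\lesssim\langle k_1\rangle^{s}$ fails with $k_1=n$, $k_2=-n+1$, $k=1$). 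The same configuration defeats the claim $\langle k_1\rangle/\langle k\rangle\lesssim 1$ on the region $|k_1|\le|k_2|$ in your part (ii). The paper instead splits by $|k_1|\le 10|k|$ versus $|k_1|>10|k|$ (after dualizing against $h\in L^2$); in the second region it deliberately keeps a factor $|k|^{-s}$ (resp.\ $|k|^{-1}$) on the output frequency and closes the sum using that $(|k|^{-s})_{k\ne 0}\in\ell^2$ precisely when $s>\frac12$. That square-summability of the low output frequency is the mechanism by which the hypothesis $s>\frac12$ enters, and it is absent from your argument. Also, in part (ii) your route from the multiplier $\langle k_2\rangle$ to the bound $\|f\|_{L^2}\|g\|_{H^1}$ via ``Young's/Cauchy--Schwarz'' does not close: Young's inequality would require $(\langle k_2\rangle\hat g_{k_2})_{k_2}\in\ell^1$, which $g\in H^1$ does not give. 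The paper's two asymmetric Cauchy--Schwarz applications in the region $|k_1|>10|k|$, again paired with $\sum_{k\ne0}|k|^{-1}\hat h_k\lesssim\|h\|_{L^2}$, are what produce the two members of the minimum.

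Your proposed ``cleaner route'' for (i) via the commutator identity $J^{-1}(Jf\,g)=fg-J^{-1}([J,g]f)$ is a legitimate alternative skeleton, but as written it is not a proof: estimating $\|[J,g]f\|_{H^{s-1}}$ for $s\in(\frac12,1)$ requires a product (or commutator) estimate in negative-order Sobolev spaces, e.g.\ $\|h w\|_{H^{s-1}}\lesssim\|h\|_{H^{s}}\|w\|_{H^{s-1}}$, which is not among the Kato--Ponce inequalities stated in Lemma \ref{lem:kato-Ponce} (those require the target regularity to be nonnegative) and is essentially of the same depth as the lemma you are trying to prove. If you want to salvage your approach, adopt the paper's decomposition by comparing $|k_1|$ with $|k|$ rather than with $|k_2|$, and supply the $\ell^2$ summation in $k$ over the low-output-frequency region.
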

\begin{proof}

(i) The desired inequality is equivalent to $\|J^{s-1}(Jf\, g)\|_{L^2}\lesssim  \|f\|_{H^s}\|g\|_{H^s}$. 
By the duality between $L^2(\T)$ and itself, it suffices to prove  
$$
(J^{s-1}(Jf\, g),h) \lesssim  \|f\|_{H^s}\|g\|_{H^s}\|h\|_{L^2} \quad\forall\, h\in L^2(\T) ,  
$$
which is equivalent to 
\begin{align*}
\sum\limits_{k}\sum\limits_{k_1+k_2=k} \l k\r^{s-1} \l k_1\r \hat f_{k_1} \hat g_{k_2} \overline{\hat h_k}
\lesssim  \|f\|_{H^s}\|g\|_{H^s}\|h\|_{L^2}.
\end{align*}
Since the term corresponding to $k=0$ satisfies 
\begin{align*}
\sum\limits_{k_1+k_2=0}  \l k_1\r \hat f_{k_1}\hat g_{k_2} \overline{\hat h_0}
&= \sum\limits_{k_1}  \l k_1\r^{\frac12} \hat f_{k_1} \l -k_1\r^{\frac12} \hat g_{-k_1} \overline{\hat h_0}\\
&\lesssim  \| (\l k_1\r^{\frac12} \hat f_{k_1})_{k_1\in\Z} \|_{l^2} 
\| (\l -k_1\r^{\frac12} \hat g_{-k_1})_{k_1\in\Z} \|_{l^2}  |\hat h_0| \\[5pt] 
&\lesssim  \|f\|_{H^{\frac12}}\|g\|_{H^{\frac12}}\|h\|_{L^1} \\
&\lesssim  \|f\|_{H^s}\|g\|_{H^s}\|h\|_{L^2} \quad\mbox{when}\,\,\, s>\frac12 , 
\end{align*}
we only need to prove the following result: 
\begin{align*}
\sum\limits_{k\ne 0}\sum\limits_{k_1+k_2=k}|k|^{s-1}|k_1| \hat f_{k_1}\hat g_{k_2} 
\overline{\hat h_k}
\lesssim  \|f\|_{H^s}\|g\|_{H^s}\|h\|_{L^2}.
\end{align*}
To this end, we decompose the left-hand side of the inequality above into two parts, i.e.,  
\begin{align}
\begin{aligned}
&\sum\limits_{k\ne 0}\sum\limits_{k_1+k_2 = k}|k|^{s-1}|k_1| \hat f_{k_1}\hat g_{k_2} \overline{\hat h_k}  \\ 
&\lesssim  \sum\limits_{k\ne 0}\sum\limits_{{\substack{k_1+k_2 = k \\   |k_1|\le 10|k|}}}|k|^{s-1}|k_1| |\hat f_{k_1}| |\hat g_{k_2}| |\hat h_k| 
+\sum\limits_{k\ne 0}\sum\limits_{{\substack{k_1+k_2 = k \\   |k_1|> 10|k|}}}|k|^{s-1}|k_1| |\hat f_{k_1}| |\hat g_{k_2}| |\hat h_k| .
\end{aligned}
\label{10.03-2}
\end{align}

The first term on the right-hand side of \eqref{10.03-2} can be estimated by using Plancherel's identity and Lemma \ref{lem:kato-Ponce} as follows: 
\begin{align*}
 \sum\limits_{k\ne 0}\sum\limits_{{\substack{k_1+k_2 = k \\   |k_1|\le 10|k|}}}|k|^{-1+s}|k_1| |\hat f_{k_1}| |\hat g_{k_2}| |\hat h_k|
 \lesssim &
  \sum\limits_{k\ne 0}\sum\limits_{{\substack{k_1+k_2 = k \\   |k_1|\le 10|k|}}}|k|^{s} |\hat f_{k_1}||\hat g_{k_2}|| \hat h_k| \\
\lesssim & (J^s(\tilde f\tilde g),\tilde h) \\
 \lesssim &
 \big\|\tilde f \tilde g\big\|_{H^s}\|\tilde h\|_{L^2}
 \lesssim 
 \|\tilde f\|_{H^s}\| \tilde g\|_{H^s}\|\tilde h\|_{L^2} , 
\end{align*}
where $\tilde f$, $\tilde g$ and $\tilde h$ are functions with Fourier coefficients $|\hat f_k|$, $|\hat g_k|$ and $|\hat h_k|$, respectively. Since 
$$
\|\tilde f\|_{H^s} \sim \|f\|_{H^s} ,
\quad
\|\tilde g\|_{H^s} \sim \|g\|_{H^s}
\quad\mbox{and}\quad
\|\tilde h\|_{L^2} \sim \|h\|_{L^2} ,  
$$
it follows that
\begin{align*}
 \sum\limits_{k\ne 0}\sum\limits_{{\substack{k_1+k_2 = k \\   |k_1|\le 10|k|}}}|k|^{-1+s}|k_1| |\hat f_{k_1}| |\hat g_{k_2}| |\hat h_k|
 \lesssim 
 \| f\|_{H^s}\|  g\|_{H^s}\| h\|_{L^2} . 
\end{align*}

In the second term on the right-hand side of \eqref{10.03-2}, we have $|k_1|\sim |k_2|> |k|$. For $s>\frac12$ we have
$$|k|^{s-1} |k_1| =|k|^{-s} |k|^{2s-1} |k_1|  \le |k|^{-s} |k_1|^{2s} \sim |k|^{-s} |k_1|^{s} |k_2|^{s}$$ and therefore  
\begin{align*}
\sum\limits_{k\ne 0}\sum\limits_{{\substack{k_1+k_2 = k\\   |k_1|> 10|k|}}}|k|^{s-1}|k_1| |\hat f_{k_1}| |\hat g_{k_2}| |\hat h_k|
\lesssim &
 \sum\limits_{k\ne 0}\sum\limits_{{\substack{k_1+k_2 = k \\   |k_1|> 10|k|}}}|k|^{-s} |k_1|^s |k_2|^s |\hat f_{k_1}||\hat g_{k_2} ||\hat h_k|\\
\lesssim &
\sum\limits_{k\ne 0} \mathcal{F}_k[J^s\tilde f\, J^s\tilde g] |k|^{-s}|\hat h_k |\\
\lesssim &
\max_{k}| \mathcal{F}_k[J^s\tilde f\, J^s\tilde g]| \sum\limits_{k\ne 0} |k|^{-s}|\hat h_k| \\ 
\lesssim &
\|J^s\tilde f\, J^s\tilde g\|_{L^1} \| (|k|^{-s})_{0\ne k\in\Z} \|_{l^2} \| (|\hat h_k|)_{0\ne k\in\Z} \|_{l^2}  \\ 
\lesssim &
\|J^s\tilde f\|_{L^2} \|J^s\tilde g\|_{L^2} \|\tilde h\|_{L^2} \\ 
 \lesssim &
 \|f\|_{H^s}\|g\|_{H^s} \|h\|_{L^2}.
\end{align*}
This completes the proof of (i).

(ii) Similarly as (i),  it suffices to prove  
\begin{align*}
\sum\limits_{k\ne 0}\sum\limits_{k_1+k_2 = k}|k|^{-1}|k_1| \hat f_{k_1}\hat g_{k_2} \overline{\hat h_k}
\lesssim \min( \|f\|_{L^2}\|g\|_{H^1}, \|f\|_{H^1}\|g\|_{L^2} ) \|h\|_{L^2} 
\quad\forall\, h\in L^2(\T). 
\end{align*}
In view of the proof of (i), we can assume $\hat f_k\ge 0$, $\hat g_k\ge 0$ and $\hat h_k\ge 0$ without loss of generality  
(otherwise we can replace $f$, $g$ and $h$ by $\tilde f$, $\tilde g$ and $\tilde h$, respectively, in the estimates below). Then
\begin{align}\label{Lemma2.2-ii-ie}
\begin{aligned}
&\sum\limits_{k\ne 0}\sum\limits_{k_1+k_2 = k}|k|^{-1}|k_1| \hat f_{k_1}\hat g_{k_2} \hat h_k  \\
&\lesssim   \sum\limits_{k\ne 0}\sum\limits_{{\substack{k_1+k_2 = k\\   |k_1|\le 10|k|}}}|k|^{-1}|k_1| \hat f_{k_1}\hat g_{k_2} \hat h_k 
+\sum\limits_{k\ne 0}\sum\limits_{{\substack{k_1+k_2 = k\\   |k_1|> 10|k|}}}|k|^{-1}|k_1| \hat f_{k_1}\hat g_{k_2} \hat h_k.
\end{aligned}
\end{align}
The first term on the right-hand side of \eqref{Lemma2.2-ii-ie} can be estimated by using Plancherel's identity and Lemma \ref{lem:kato-Ponce}: 
\begin{align*}
 \sum\limits_{k\ne 0}\sum\limits_{{\substack{k_1+k_2 = k\\   |k_1|\le 10|k|}}}|k|^{-1}|k_1| \hat f_{k_1}\hat g_{k_2} \hat h_k  \lesssim &
  \sum\limits_{k\ne 0}\sum\limits_{{\substack{k_1+k_2 = k\\   |k_1|\le 10|k|}}}\hat f_{k_1}\hat g_{k_2} \hat h_k\\
  \lesssim &
  \sum\limits_{k\ne 0} \mathcal{F}_k[fg]  \hat h_k\\ 
\lesssim &
  \| (\mathcal{F}_k[fg])_{0\ne k\in\Z} \|_{l^2} \| (\hat h_k)_{0\ne k\in\Z} \|_{l^2}  \\ 
 \lesssim &
 \big\|fg\big\|_{L^2}\|h\|_{L^2} \\
  \lesssim &
 \min( \|f\|_{L^2}\|g\|_{L^\infty},\|f\|_{L^\infty}\|g\|_{L^2} ) \|h\|_{L^2}\\ 
  \lesssim &
 \min(\|f\|_{L^2}\|g\|_{H^{1}},\|f\|_{H^{1}}\|g\|_{L^2})\|h\|_{L^2} .
\end{align*}
In the second term on the right-hand side of \eqref{Lemma2.2-ii-ie} we have $|k_1|\sim |k_2|>k$. On the one hand, we have
\begin{align*}
\sum\limits_{k\ne 0}\sum\limits_{{\substack{k_1+k_2=k \\   |k_1|> 10|k|}}}|k|^{-1}|k_1| \hat f_{k_1}\hat g_{k_2} \hat h_k
\lesssim &
\sum\limits_{k\ne 0}\sum\limits_{k_2} |k|^{-1} |k_2| \hat f_{k-k_2}\hat g_{k_2}  \hat h_k \\
\lesssim &
\bigg( \sup_{k} \sum_{k_2}|\hat f_{k-k_2}|^2 \bigg)^{\frac12} \bigg(\sum_{k_2} |k_2|^2  |\hat g_{k_2}|^2 \bigg)^{\frac12} \sum\limits_{k\ne 0}|k|^{-1}  \hat h_k \\
\lesssim & \|f\|_{L^2}\|g\|_{H^1}\|h\|_{L^2}.
\end{align*}
On the other hand, we have 
\begin{align*}
\sum\limits_{k\ne 0}\sum\limits_{{\substack{k=k_1+k_2\\   |k_1|> 10|k|}}}|k|^{-1}|k_1| \hat f_{k_1}\hat g_{k_2} \hat h_k
\lesssim &
\sum\limits_{k\ne 0}\sum\limits_{k_1} |k|^{-1} |k_1| \hat f_{k_1}\hat g_{k-k_1}  \hat h_k \\
\lesssim &
\bigg(\sum_{k_1}|k_1|^2|\hat f_{k_1}|^2 \bigg)^{\frac12} \bigg( \sup_{k}\sum_{k_1}   |\hat g_{k-k_1}|^2 \bigg)^{\frac12} \sum\limits_{k\ne 0}|k|^{-1}  \hat h_k \\
\lesssim & \|f\|_{H^1}\|g\|_{L^2}\|h\|_{L^2}.
\end{align*}
This completes the proof of (ii).
\end{proof}


\section{Construction of the method through analysing consistency error}
\label{section:construction}

In this section we construct the numerical method based on twisted variables and Duhamel's formula through analysing the consistency errors in approximating the exponential integrals using harmonic analysis techniques. 
For readers' convenience, we present the derivation of the numerical method in subsection \ref{section:derivation} and defer the technical estimates to subsection \ref{section:consistency}. 

\subsection{Construction of the numerical method}\label{section:derivation}

\noindent 
As mentioned in the introduction section and the beginning of section \ref{section:result}, the NLS equation \eqref{model} has a unique solution $u\in C([0,T];H^1(\T))$ satisfying the Duhamel's formula: 
\begin{align}\label{Duhamel-u}
u(t_{n+1})=\fe^{i\tau\partial_x^2}u(t_n)
 -  i \lambda \int_0^\tau \fe^{i\left(t_{n+1}-(t_n+s)\right)\partial_x^2}
   |u(t_n+s)|^2u(t_n+s) \,ds ,
\end{align} 
as well as the mass and momentum conservations \eqref{mass}--\eqref{momentum}. The norm $\|u\|_{C([0,T];H^1(\T))} $ is bounded by a constant depending on $\|u^0\|_{H^1}$; see \cite{Bo}. 

Let $v(t):=\fe^{-it\partial_x^2}u(t)$ be the twisted variable. Then $v\in C([0,T];H^1(\T))$ satisfies $\|v\|_{C([0,T];H^1(\T))}=\|u\|_{C([0,T];H^1(\T))}$ and the following conservation laws simiarly as $u$, i.e., 
\begin{enumerate}
\item
Mass conservation: 
\begin{align}
\frac1{2\pi}\int_\T |v(t,x)|^2\,d x = \frac1{2\pi}\int_\T |u(t,x)|^2\,d x = M \quad\mbox{for}\,\,\, t>0 .  \label{mass-v}
\end{align}

\item
Momentum conservation: 
\begin{align}
\frac1{2\pi}\int_\T v(t,x)\partial_x\bar v(t,x)\,d x = \frac1{2\pi}\int_\T u(t,x)\partial_x\bar u(t,x)\,d x 
=P \quad\mbox{for}\,\,\, t>0 .  \label{momentum-v}
\end{align}
\end{enumerate}
Applying the operator $\fe^{-it_{n+1} \partial_x^2}$ to the identity \eqref{Duhamel-u}, we obtain 
\begin{align}\label{solution}
v(t_{n+1})=v(t_n)-i \lambda \int_0^\tau \fe^{-i(t_n+s)\partial_x^2}
    \big[|\fe^{i(t_n+s)\partial_x^2}v(t_n+s)|^2\,\fe^{i(t_n+s)\partial_x^2}v(t_n+s)\big]\,ds.
\end{align}
The Fourier coefficients of both sides of \eqref{solution} should be equal, i.e.,  
\begin{align}\label{solution-F}
\hat v_{ k }(t_{n+1})
  =\hat v_{ k }(t_n)
     -i \lambda \int_0^\tau\sum\limits_{ k _1+ k _2+ k _3 = k }\fe^{i(t_n+s)\phi}
        \>\hat{ \bar v}_{ k _1}(t_n+s)\hat v_{ k _2}(t_n+s)\hat v_{ k _3}(t_n+s)\,ds , 
\end{align}
with a phase function  
\begin{align*}
\phi 
=\phi( k, k _1, k _2, k _3) 
= k ^2+ k _1^2- k _2^2- k _3^2.
\end{align*}

Replacing $\tau$ and $s$ in \eqref{solution-F} by $s$ and $\sigma$, respectively, we have 
\begin{align}\label{solution-F2}
\hat v_{ k }(t_{n}+s)
  =\hat v_{ k }(t_n)
     -i \lambda \int_0^{s}\sum\limits_{k _1+ k _2+ k _3 = k }\fe^{i(t_n+\sigma)\phi}
        \>\hat{ \bar v}_{ k _1}(t_n+s)\hat v_{ k _2}(t_n+\sigma)\hat v_{ k _3}(t_n+\sigma)\,d\sigma . 
\end{align}
In view of \eqref{solution-F2} and the definition of $\mathcal T_m(M; v)$ in \eqref{def:TmM}, we have 
\begin{align}\label{vtn+s-vtn}
v(t_n+s)-v(t_n) \in \int_0^s \, \mathcal T_3(1;v(t_n+\sigma)) d\sigma .
\end{align} 
As a result, \eqref{solution-F} can be written as 
\begin{align}\label{v-1st-phi}
\hat v_{ k }(t_{n+1}) =\hat v_{ k }(t_n)
    -i \lambda \sum\limits_{ k _1+ k _2+ k _3 = k } 
       \>\hat{ \bar v}_{ k _1}(t_n)\hat v_{ k _2}(t_n)\hat v_{ k _3}(t_n) \int_0^\tau \fe^{i(t_n+s)\phi}\,ds+\>\hat{\mathcal R}_{1,k} ,
\end{align}
with
\begin{align*}
&\hat{\mathcal R}_{1,k} \\ 
&= -i \lambda \int_0^\tau\sum\limits_{ k _1+ k _2+ k _3 = k }\fe^{i(t_n+s)\phi}
        \>\big(\hat{ \bar v}_{ k _1}(t_n+s)\hat v_{ k _2}(t_n+s)\hat v_{ k _3}(t_n+s)
        -\hat{ \bar v}_{k _1}(t_n)\hat v_{ k _2}(t_n)\hat v_{ k _3}(t_n)\big)\,ds \\
&\in \int_0^\tau\int_0^s \, \mathcal T_5(1;v(t_n+\sigma)) d\sigma ds , 
\end{align*}
where the last inclusion is based on the definition in \eqref{def:TmM2}. 
If $\mathcal{R}_1$ denotes the function with Fourier coefficients 
$\hat{\mathcal R}_{1,k}$, then the relation above implies that (according to Lemma \ref{lem:Rj} (i) of the next subsection) 
\begin{align}\label{est:R1}
\big\|\mathcal R_1\big\|_{H^1}\lesssim \tau^2\|v\|_{L^\infty_tH^1_x}^5.
\end{align}
This term will be dropped in our numerical scheme. 

In the following, we approximate the second term on the right-hand side of \eqref{v-1st-phi} by expressions that can be evaluated efficiently with FFT. To this end, we consider the three cases $k=0$, $|k|>N$ and $0\ne |k|\le N$, separately. 

{\sc Case} 1: $k=0$.  
In this case, \eqref{v-1st-phi} reduces to 
\begin{align}\label{zeromode-1}
\hat v_0(t_{n+1})
=&\hat v_0(t_n) -i\lambda\sum\limits_{k_1+k_2+k_3=0} 
\hat{ \bar v}_{k_1}(t_n)\hat v_{k_2}(t_n)\hat v_{k_3}(t_n) 
\int_0^\tau \fe^{i(t_n+s)(k_1^2-k_2^2-k_3^2)}\,ds 
+ \hat{\mathcal R}_{1,0} \\
=&
       \hat v_0(t_n)-i\lambda \tau\sum\limits_{k_1+k_2+k_3=0} \fe^{it_n(k_1^2-k_2^2-k_3^2)}
       \>\hat{ \bar v}_{k_1}(t_n)\hat v_{k_2}(t_n)\hat v_{k_3}(t_n)
+ \hat{\mathcal R}_{1,0}+\hat{\mathcal R}_{2,0}\notag\\ 
=&
\hat v_0(t_n)-i\lambda \tau \Pi_0\big(\big|\fe^{it_n \partial_{x}^2}v(t_n)\big|^2\fe^{it_n \partial_{x}^2}v(t_n)\big) 
+\hat{\mathcal R}_{1,0}+\hat{\mathcal R}_{2,0}\notag\\[5pt]
=&
\hat v_0(t_n)-i\lambda \tau \Pi_0\big[\Pi_N\big(\big|\fe^{it_n \partial_{x}^2}v(t_n)\big|^2)\fe^{it_n \partial_{x}^2}v(t_n)\big]
+\hat{\mathcal R}_{1,0}+\hat{\mathcal R}_{2,0} +  \hat{\mathcal R}_{2,0}^* , 
\end{align} 
with 
\begin{align*}
\hat{\mathcal R}_{2,k}
&= -i\lambda\sum\limits_{k_1+k_2+k_3=k} 
\widehat{\bar v}_{k_1}(t_n)\hat v_{k_2}(t_n)\hat v_{k_3}(t_n) 
\int_0^\tau (\fe^{i(t_n+s)(k_1^2-k_2^2-k_3^2)}-\fe^{it_n(k_1^2-k_2^2-k_3^2)})\,ds  , \\[3pt] 
{\mathcal R}_{2}^*
&= -i\lambda \tau \big[(1-\Pi_N)\big(\big|\fe^{it_n \partial_{x}^2}v(t_n)\big|^2 \big)\big]\fe^{it_n \partial_{x}^2}v(t_n) \in \tau \fe^{it_n \partial_{x}^2}v(t_n) \mathcal{T}_2(1_{>N};v(t_n)) ,  
\end{align*}
where 
\begin{align}\label{def-1N}
(1_{>N})_k 
= 1_{|k|>N} 
= 
\left\{
\begin{aligned}
&0 &&\mbox{for}\,\,\, |k|\le N,\\
&1 &&\mbox{for}\,\,\, |k|> N .  
\end{aligned}
\right.
\end{align}
Since $k_1+k_2+k_3=0$, it follows that there holds 
$
k_1^2-k_2^2-k_3^2=2k_2 k_3 
$ 
and therefore 
\begin{align*} 
\int_0^\tau \left(\fe^{i(t_n+s)(k_1^2-k_2^2-k_3^2)}-\fe^{it_n(k_1^2-k_2^2-k_3^2)}\right)\,ds = \tau^2 O(k_2k_3).
\end{align*}
As a result, the function $\mathcal R_2$ (with Fourier coefficients $\hat{\mathcal R}_{2,k}$) satisfies that $\mathcal R_2\in \tau^2 \mathcal T_3(k_2k_3;v(t_n))$ in view of the definition in  \eqref{def:TmM}. 
According to Lemma \ref{lem:Rj} (i)--(ii) of the next subsection, $\mathcal R_2$ and $\mathcal R_2^*$ satisfy the following estimates: 
\begin{align}\label{est:R2}
|\hat{\mathcal R}_{2,0}|
&\lesssim \tau^2\|v\|_{L^\infty_tH^1_x}^3 ,\\[3pt]
|\hat{\mathcal R}_{2,0}^*|
&\lesssim
\|\mathcal R_{2}^*\|_{L^1}
\lesssim 
\tau 
 \|\fe^{it_n \partial_{x}^2}v(t_n)\|_{L^\infty} \|(1-\Pi_N)(|\fe^{it_n \partial_{x}^2}v(t_n)|^2 )\|_{L^2} 
\lesssim
\tau N^{-1} \|v\|_{L^\infty_tH^1_x}^3  .
\end{align} 
The two terms $\hat{\mathcal R}_{2,0}$ and $\hat{\mathcal R}_{2,0}^*$ will be dropped in our numerical scheme. 



{\sc Case} 2: $|k|> N$. Let $\mathcal R_3$ be the function with Fourier coefficients    
$$ 
\hat{\mathcal R}_{3,k}=-1_{|k|>N} \, i\lambda \sum\limits_{k_1+k_2+k_3=k} 
\hat{ \bar v}_{ k _1}(t_n) \hat v_{ k _2}(t_n) \hat v_{ k _3}(t_n) 
\int_0^\tau \fe^{i(t_n+s)\phi}\,ds . 
$$ 
Then 
\begin{align*}
\mathcal R_3 \in\tau \,  \mathcal T_3\left(1_{>N};v(t_n)\right) .  
\end{align*}
Lemma \ref{lem:Rj} (i) of the next subsection implies that 
\begin{align}\label{est:R3}
\big\|\mathcal R_3\big\|_{H^s} \lesssim \tau N^{-1+s}\|v\|_{L^\infty_tH^1_x}^3
\quad \mbox{for}\,\,\, s\in[0,1] . 
\end{align}
This term will be dropped in the numerical scheme. 

{\sc Case} 3: $0\ne |k|\le N$. 
By using the  identity 
$$1=\frac{(k_1+k_2)+(k_1+k_3)-k_1}{k} $$
and symmetry between $k_2$ and $k_3$, 
we can decompose the second term on the right-hand side of \eqref{v-1st-phi} into two parts, i.e., 
\begin{subequations}\label{v-2-0}
\begin{align}
\hat v_{ k }(t_{n+1}) 
=\hat v_{ k }(t_n) &
-2i\lambda\sum\limits_{k_1+k_2+k_3=k} \hat{ \bar v}_{k_1}(t_n)\hat v_{k_2}(t_n)\hat v_{k_3}(t_n) \int_0^\tau\frac{k_1+k_2}{k} \fe^{i(t_n+s)\phi}\,ds  \label{v-2-2}\\
    &  +i\lambda\sum\limits_{k_1+k_2+k_3=k}\hat{ \bar v}_{k_1}(t_n)\hat v_{k_2}(t_n)\hat v_{k_3}(t_n) \int_0^\tau\frac{k_1}{k} \fe^{i(t_n+s)\phi}\,ds    \label{v-2-3} \\
    & +\>\hat{\mathcal R}_{1,k} .
\end{align}
\end{subequations}

We furthermore truncate \eqref{v-2-2} to the frequency domain $|k_1+k_3|\le N$, i.e.,  
\begin{align}
  \eqref{v-2-2}= \hat v_{ k }(t_n)
  &-2i \lambda \sum\limits_{{\substack{k_1+k_2+k_3=k\\   |k_1+k_3|\le N}}} \bigg(\int_0^\tau\frac{k_1+k_2}{k} \fe^{i(t_n+s)\phi}\,ds\bigg)
     \>\hat{ \bar v}_{k_1} (t_n) \hat v_{k_2}(t_n) \hat v_{k_3}(t_n) + \hat{\mathcal R}_{4,k}\label{v-2-2-1},
\end{align}
with 
$$
\hat{\mathcal R}_{4,k} = 
\left\{
\begin{aligned}
& -2i \lambda \sum\limits_{{\substack{k_1+k_2+k_3=k\\   |k_1+k_3|> N}}} \bigg(\int_0^\tau\frac{k_1+k_2}{k} \fe^{i(t_n+s)\phi}\,ds\bigg)
     \>\hat{ \bar v}_{k_1} (t_n) \hat v_{k_2}(t_n) \hat v_{k_3}(t_n) 
     &&\mbox{for}\,\,\, 0\neq |k|\le N ,\\
&0 &&\mbox{otherwise} .
\end{aligned}
\right.
$$
The corresponding function $\mathcal R_4$ with Fourier coefficients $\hat{\mathcal R}_{4,k}$ satisfies that 
$$
\mathcal R_4\in \tau \,\mathcal T_3\left(\frac{k_1+k_2}{k}1_{0\neq |k|\le N} 1_{|k_1+k_3|>N};v(t_n)\right).
$$
By Lemma \ref{lem:Rj} (iii) in the next subsection and symmetry between $k_2$ and $k_3$, and we have 
\begin{align}\label{est:R4}
\big\|\mathcal R_4\big\|_{H^s}\lesssim \tau N^{-1+s}\|v\|_{L^\infty_tH^1_x}^3 
\quad\mbox{for}\,\,\, s\in[0,1].
\end{align}
Since $k_1+k_2+k_3=k$, it is straightforward to verify that 
$\phi=2(k_1+k_2)(k_1+k_3).$ As a result, if $k_1+k_3\neq0$ then 
\begin{align}\label{relations-1}
\int_0^\tau\frac{k_1+k_2}{k} \fe^{i(t_n+s)\phi}\,ds
  =\frac{1}{2ik(k_1+k_3)}
     \big(\fe^{it_{n+1}\phi}-\fe^{it_n\phi}\big);
\end{align}
If $k_1 + k_3=0$ then $\phi=0$ and $k=k_2$, and therefore
\begin{align}\label{relations-2}
\int_0^\tau\frac{k_1+k_2}{k} \fe^{i(t_n+s)\phi}\,ds
  =\tau\Big(\frac{k_1}{k}+1\Big).
\end{align}
Substituting the two relations \eqref{relations-1}--\eqref{relations-2} into \eqref{v-2-2-1}, we obtain 
\begin{align*}
\eqref{v-2-2}
   =\hat v_{ k }(t_n) 
   & -\lambda\sum\limits_{{\substack{k_1+k_2+k_3=k\\ 0\neq |k_1+k_3|\le N}}} \frac{1}{k(k_1+k_3)}
     \big(\fe^{it_{n+1}\phi}-\fe^{it_n\phi}\big)
         \>\hat{ \bar v}_{k_1}(t_n)  \hat v_{k_2}(t_n) \hat v_{k_3}(t_n)  \\
   & -2i\lambda \tau\sum\limits_{k_1+k_3= 0} \Big(\frac{k_1}{k}+1\Big) 
         \>\hat{ \bar v}_{k_1}(t_n)  \hat v_{k}(t_n)  \hat v_{k_3}(t_n)  + \hat{\mathcal R}_{4,k} .
\end{align*}
Then we apply the mass and momentum conservations in \eqref{mass-v}--\eqref{momentum-v}, which imply that 
\begin{align*}
 -2i\lambda\tau\sum\limits_{k_1+k_3= 0} \Big(\frac{k_1}{k}+1\Big)
         \>\hat{ \bar v}_{k_1}(t_n) \hat v_{k}(t_n) \hat v_{k_3}(t_n) 
=&-2i\lambda \tau P\>  (ik)^{-1}\hat v_{k}(t_n)   -2i\lambda\tau M  \>\hat v_{k}(t_n)  .
\end{align*}
Therefore, 
\begin{align}\label{est:v-2-2}
\eqref{v-2-2}
   =& \hat v_{ k }(t_n) - \lambda\sum\limits_{{\substack{k_1+k_2+k_3=k\\0\ne |k_1+k_3| \le N}}} \frac{1}{k(k_1+k_3)}
     \big(\fe^{it_{n+1}\phi}-\fe^{it_n\phi}\big)
         \>\hat{ \bar v}_{k_1}(t_n)\hat v_{k_2}(t_n)\hat v_{k_3}(t_n) \notag \\
   & -2i\lambda\tau P   \> (ik)^{-1}\hat v_{k}(t_n) -2i\lambda\tau M  \>\hat v_{k}(t_n) + \hat{\mathcal R}_{4,k} \notag \\[5pt] 
   =
   & e^{-2i\lambda\tau P   \> (ik)^{-1}  -2i\lambda\tau M}  \>\hat v_{k}(t_n) \notag \\  
   & - \lambda\sum\limits_{{\substack{k_1+k_2+k_3=k\\0\ne |k_1+k_3| \le N}}} \frac{1}{k(k_1+k_3)}
     \big(\fe^{it_{n+1}\phi}-\fe^{it_n\phi}\big)
         \>\hat{ \bar v}_{k_1}(t_n)\hat v_{k_2}(t_n)\hat v_{k_3}(t_n) \notag \\
   & + \hat{\mathcal R}_{4,k} + \hat{\mathcal R}_{4,k}^* , 
\end{align}
where 
\begin{align*}
\hat{\mathcal R}_{4,k}^* 
&= 
\left\{
\begin{aligned}
&\big(1-2i\lambda\tau P (ik)^{-1} 
- 2i\lambda\tau M  - e^{-2i\lambda\tau P (ik)^{-1} - 2i\lambda\tau M} \big) \hat v_{k}(t_n)
&&\mbox{for}\,\,\, 0\neq |k|\le N,\\[3pt] 
&0 &&\mbox{otherwise} . 
\end{aligned}
\right.
\end{align*}
From this expression we see that the function $\mathcal{R}_4^*$ with Fourier coefficients $\hat{\mathcal R}_{4,k}^*$ satisfies that 
\begin{align}
\mathcal{R}_4^* 
\in \tau^2 T_1(1;v(t_n)) . 
\end{align}

Note that for $k_1+k_2+k_3=k$ the following equalities hold:
\begin{subequations}
\begin{align}
\phi(k,k_1,k_2,k_3) &=2kk_1+2k_2 k_3 , \label{phase-1}\\
2kk_1 &=k^2+k_1^2-(k_2+k_3)^2 , \\
2k_2k_3 &=(k_2+k_3)^2-k_2^2-k_3^2.\label{phase-2}
\end{align}
\end{subequations}
By using these relations, we have 
$$
e^{i(t_n+s)\phi} 
= e^{it_n \phi} e^{2iskk_1} e^{2isk_2k_3}
= e^{it_n \phi} 
[e^{2iskk_1} + (e^{2isk_2k_3} - 1 ) 
+ (e^{2iskk_1} - 1)(e^{2isk_2k_3} - 1) ] , 
$$
and therefore \eqref{v-2-3} can be decomposed into the following three terms:
\begin{subequations}\label{v-2-45}
\renewcommand{\theequation}
{\theparentequation-\arabic{equation}}
\begin{align}
    \eqref{v-2-3} 
  =&  i\lambda\sum\limits_{{\substack{k_1+k_2+k_3=k \\ |k_2+k_3| \le N}}} 
           \bigg(\int_0^\tau \frac{k_1}{k} \fe^{it_n\phi} \fe^{2iskk_1}\,ds\bigg) 
         \hat{ \bar v}_{k_1}(t_n)\hat v_{k_2}(t_n)\hat v_{k_3}(t_n)  \label{v-2-5}\\
    &+ i\lambda\sum\limits_{{\substack{k_1+k_2+k_3=k\\ |k_2+k_3| \le N}}} 
    \bigg( \int_0^\tau \frac{k_1}{k} \fe^{it_n\phi} \big(\fe^{2isk_2k_3}-1\big)\,ds \bigg) \hat{ \bar v}_{k_1}(t_n)\hat v_{k_2}(t_n)\hat v_{k_3}(t_n) 
          \label{v-2-6}\\
     &+\widehat{\mathcal{R}}_{5,k} +\widehat{\mathcal{R}}_{5,k}^* 
            , \label{v-2-4} 
\end{align}
\end{subequations}
where 
\begin{align}
\widehat{\mathcal{R}}_{5,k} 
&= i\lambda\sum\limits_{{\substack{k_1+k_2+k_3=k\\ |k_2+k_3| \le N}}} 
    \bigg(\int_0^\tau \frac{k_1}{k}
      \fe^{it_n\phi}  \big(\fe^{2iskk_1}-1\big)\big(\fe^{2isk_2k_3}-1\big)\,ds \bigg)
    \hat{ \bar v}_{k_1}(t_n)\hat v_{k_2}(t_n)\hat v_{k_3}(t_n)  ,
    \label{def-R5k} \\ 
\widehat{\mathcal{R}}_{5,k}^* 
&= i\lambda\sum\limits_{{\substack{k_1+k_2+k_3=k\\ |k_2+k_3| > N}}}
\bigg( \int_0^\tau\frac{k_1}{k} \fe^{i(t_n+s)\phi}\,ds\bigg)
\hat{ \bar v}_{k_1}(t_n)\hat v_{k_2}(t_n)\hat v_{k_3}(t_n)  ,
\label{def-R6k} 
\end{align}
for $0\neq |k|\le N$, and $\widehat{\mathcal{R}}_{5,k} =\widehat{\mathcal{R}}_{5,k}^* =0$ for $k=0$ and $|k|>N$. Lemma \ref{lem:R5} of the next subsection implies that 
\begin{subequations}\label{est:R5}
\begin{align}
&\big\|\mathcal R_5\big\|_{H^s}\lesssim \tau^\frac32 \|v\|_{L^\infty_tH^1_x}^3 \quad\mbox{for}\,\,\, s\in (\mbox{$\frac12$},1), \\
&\big\|\mathcal R_5\big\|_{L^2}\lesssim \tau^2\sqrt{\ln\tau^{-1}} \|v\|_{L^\infty_tH^1_x}^3.
\end{align}
\end{subequations}
Obviously, 
$$
\mathcal R_5^*\in \tau  \mathcal T_3\left(\sigma;v(t_n)\right) 
\quad\mbox{with some}\,\,\, |\sigma(k,k_1,k_2,k_3)|\le |k|^{-1}|k_1|1_{0\ne |k|\le N} 1_{|k_2+k_3|>N} .
$$
By Lemma \ref{lem:Rj} (iii) and symmetry, and we have that for any $s\in[0,1]$, 
\begin{align}\label{est:R6}
\big\|\mathcal R_5^*\big\|_{H^s}\lesssim \tau N^{-1+s} \|v\|_{L^\infty_tH^1_x}^3 .
\end{align}

Note that 
\begin{align}
   \eqref{v-2-5}
       &=\sum\limits_{{\substack{k_1+k_2+k_3=k\\ |k_2+k_3| \le N}}}  \frac{\lambda}{2k^2}
              \fe^{it_n\phi}\big(\fe^{i\tau (k^2+k_1^2-(k_2+k_3)^2)}-1\big)
              \hat{ \bar v}_{k_1}(t_n)\hat v_{k_2}(t_n)\hat v_{k_3}(t_n)  ,  \label{v-2-5-est} \\[5pt]
\eqref{v-2-6}
&=
i\lambda\sum\limits_{{\substack{k_1+k_2+k_3=k\\   k_2\ne 0, k_3\ne 0\\ |k_2+k_3| \le N}}} \bigg(\int_0^\tau \frac{k_1}{k}
         \fe^{it_n\phi} \big(\fe^{2isk_2k_3}-1\big)\,ds\bigg) 
         \hat{ \bar v}_{k_1}(t_n)\hat v_{k_2}(t_n)\hat v_{k_3}(t_n) \notag\\
&=
\lambda\sum\limits_{{\substack{k_1+k_2+k_3=k\\   k_2\ne 0, k_3\ne 0\\ |k_2+k_3| \le N}}} \frac{k_1}{2kk_2k_3}
         \fe^{it_n\phi} \big(\fe^{2i\tau k_2k_3}-1\big)
           \hat{ \bar v}_{k_1}(t_n)\hat v_{k_2}(t_n)\hat v_{k_3}(t_n) \notag\\
           &\quad\,
           -i\lambda\tau \sum\limits_{{\substack{k_1+k_2+k_3=k\\   k_2\ne 0, k_3\ne 0\\ |k_2+k_3| \le N}}} \frac{k_1}{k}
         \fe^{it_n\phi}  
           \hat{ \bar v}_{k_1}(t_n)\hat v_{k_2}(t_n)\hat v_{k_3}(t_n) \notag\\
     &  =
    \lambda\sum\limits_{{\substack{k_1+k_2+k_3=k\\   k_2\ne 0, k_3\ne 0\\ |k_2+k_3| \le N}}} \frac{k_1}{2kk_2k_3}
         \fe^{it_n\phi} \Big(\fe^{i\tau \big((k_2+k_3)^2-k_2^2-k_3^2\big)}-1\Big)
           \hat{ \bar v}_{k_1}(t_n)\hat v_{k_2}(t_n)\hat v_{k_3}(t_n)
      \notag   \\   
     &\quad\,  -i\lambda\tau \sum\limits_{{\substack{k_1+k_2+k_3=k\\ |k_2+k_3| \le N}}} \frac{k_1}{k}
         \fe^{it_n\phi}  
           \hat{ \bar v}_{k_1}(t_n)\hat v_{k_2}(t_n)\hat v_{k_3}(t_n) \notag \\
     &\quad\, 
           +2i\lambda \tau \sum\limits_{{\substack{k_1+k_2=k\\ |k_2| \le N}}} \frac{k_1}{k}
         \fe^{it_n\phi}  
           \>\hat{ \bar v}_{k_1}(t_n)\hat v_{k_2}(t_n)\hat v_{0}(t_n) \notag \\
   &\quad\,  -  i\lambda \tau  
         \fe^{it_n\phi}  
           \>\hat{ \bar v}_{k}(t_n)\hat v_{0}(t_n)\hat v_{0}(t_n) .  \label{Eq:v-2-6} 
\end{align}
 
  
Substituting \eqref{v-2-5-est}--\eqref{Eq:v-2-6} into \eqref{v-2-45}, 
and then substituting \eqref{est:v-2-2} and \eqref{v-2-45} into \eqref{v-2-0}, we obtain  
\begin{align}\label{est:v-n0}
\hat v_k(t_{n+1})
=&e^{-2i\lambda\tau P   \> (ik)^{-1}  -2i\lambda\tau M}  \>\hat v_{k}(t_n)  \notag\\
&
    +\lambda \sum\limits_{{\substack{k_1+k_2+k_3=k\\ 0\ne |k_1+k_3| \le N}}} \frac{1}{ik(ik_1+ ik_3)}
     \big(\fe^{it_{n+1}\phi}-\fe^{it_n\phi}\big) 
         \>\hat{ \bar v}_{k_1}(t_n)\hat v_{k_2}(t_n)\hat v_{k_3}(t_n)\notag\\
   &
         - \lambda \sum\limits_{{\substack{k_1+k_2+k_3=k\\ |k_2+k_3| \le N}}}  \frac{1}{2(ik)^2}
              \fe^{it_n\phi} \big(\fe^{i\tau (k^2+k_1^2-(k_2+k_3)^2)}-1\big)
               \>\hat{ \bar v}_{k_1}(t_n)\hat v_{k_2}(t_n)\hat v_{k_3}(t_n)\notag\\
       &-  \lambda\sum\limits_{{\substack{k_1+k_2+k_3=k\\ k_2\ne 0, k_3\ne 0\\ |k_2+k_3| \le N}}} \frac{ik_1}{2(ik)( ik_2)( ik_3)}
         \fe^{it_n\phi} \big(\fe^{i\tau ((k_2+k_3)^2-k_2^2-k_3^2)}-1\big) 
           \>\hat{ \bar v}_{k_1}(t_n)\hat v_{k_2}(t_n)\hat v_{k_3}(t_n) 
        \notag  \\  
     & -i\lambda\tau \sum\limits_{{\substack{k_1+k_2+k_3=k\\ |k_2+k_3| \le N}}} \frac{k_1}{k}
         \fe^{it_n\phi}  
           \>\hat{ \bar v}_{k_1}(t_n)\hat v_{k_2}(t_n)\hat v_{k_3}(t_n) \notag\\
     & +2i\lambda \tau \sum\limits_{{\substack{k_1+k_2=k\\ |k_2| \le N}}} \frac{k_1}{k}
         \fe^{it_n\phi}  
           \>\hat{ \bar v}_{k_1}(t_n)\hat v_{k_2}(t_n)\hat v_{0}(t_n)  \notag\\
     & -  i \lambda \tau  
         \fe^{it_n\phi}  
           \>\hat{ \bar v}_{k}(t_n)\hat v_{0}(t_n)\hat v_{0}(t_n)\notag\\[5pt] 
 & +\hat{\mathcal R}_{1,k}+\hat{\mathcal R}_{4,k} +\hat{\mathcal R}_{4,k} ^* +\hat{\mathcal R}_{5,k} +\hat{\mathcal R}_{5,k}^* 
 \quad\mbox{for}\,\,\, k\ne 0 \,\,\,\mbox{and}\,\,\, |k|\le N.
\end{align}
Then substituting \eqref{zeromode-1} and \eqref{est:v-n0} into the expression $v(t_{n+1})= \sum_{k\in\Z} \hat v_k(t_{n+1})e^{ikx}$ yields 
\begin{align}\label{vtn+1-app}
v(t_{n+1})=
&\Phi^n(v(t_n);M,P)+ \mathcal R_{1}+\hat{\mathcal R}_{2,0} +  \hat{\mathcal R}_{2,0}^* +  {\mathcal R}_{3}
+ \mathcal R_4 + \mathcal R_4^* + \mathcal R_5 + \mathcal R_5^* ,
\end{align}
where 
\begin{align}\label{Phi-def}
\Phi^n(f; M,P)
:=\, 
& \fe^{-2i\lambda\tau P\partial_x^{-1}-2i\lambda\tau M}f + (1-\fe^{-2i\lambda\tau M}) \Pi_0 f \notag \\
& 
-i\lambda \tau \Pi_0\big[ \Pi_N\big(\big|\fe^{it_n \partial_{x}^2}f\big|^2\big)\fe^{it_n \partial_{x}^2}f\big]
\notag\\
 & + \lambda\fe^{-it_{n+1}\partial_x^2} \partial_x^{-1}\Pi_N
          \Big[ \big(\fe^{it_{n+1}\partial_x^2} f\big)
              \cdot \partial_x^{-1}\Pi_N
                \big(|\fe^{it_{n+1}\partial_x^2} f|^2 \big)   \Big]\notag\\
 &
                -\lambda \fe^{-it_n\partial_x^2} \partial_x^{-1}\Pi_N
          \Big[ \big(\fe^{it_n\partial_x^2} f \big)
              \cdot \partial_x^{-1} \Pi_N
                \big(|\fe^{it_n\partial_x^2} f |^2 \big)   \Big]\notag\\     
 & -\frac{\lambda}{2} \bigg[  \fe^{-it_{n+1}\partial_x^2} \partial_x^{-2}\Pi_N
             \Big(\big(\fe^{-it_{n+1}\partial_x^2}\bar f\big)
              \cdot \fe^{i\tau\partial_x^2}\Pi_N
                  \big(\fe^{it_n\partial_x^2} f\big)^2\Big)\notag\\
 &
                 \qquad\quad - \fe^{-it_n\partial_x^2} \partial_x^{-2}\Pi_N
          \Big( \fe^{-it_n\partial_x^2} \bar f  \Pi_N
             \big(\fe^{it_n\partial_x^2} f\big)^2\Big) \bigg] \notag \\ 
 & -\frac{\lambda}{2}
 \bigg[ \fe^{-it_n\partial_x^2} \partial_x^{-1}\Pi_N
          \Big( \big(\fe^{-it_n\partial_x^2} \partial_x \bar f\big)
              \cdot \fe^{-i\tau\partial_x^2} \Pi_N
                \big(\fe^{it_{n+1}\partial_x^2}\partial_x^{-1} f \big)^2   \Big) \notag\\
 &
                \qquad\quad - \fe^{-it_n\partial_x^2} \partial_x^{-1}\Pi_N
          \Big( \big(\fe^{-it_n\partial_x^2} \partial_x \bar f\big)
              \cdot \Pi_N
              \big(\fe^{it_n\partial_x^2}\partial_x^{-1} f \big)^2   \Big) \bigg] \notag\\
&
-i\lambda\tau \fe^{-it_n\partial_x^2}\partial_x^{-1} \Pi_N
\Big( \fe^{-it_n\partial_x^2} \partial_x\bar f \Pi_N\big(\fe^{it_n\partial_x^2} f\big)^2\Big) \notag \\ 
& +2i\lambda \tau \Pi_0(f) \fe^{-it_n\partial_x^2} \partial_x^{-1}  \left(\fe^{-it_n\partial_x^2}\partial_x\bar f\>\fe^{it_n \partial_{x}^2}f\right) \notag\\
&
-i\lambda\tau (\Pi_0 f)^2 \Pi_{\ne 0}\big(\fe^{-it_n\partial_x^2}\bar f\big) .
\end{align}

The numerical scheme can be defined by dropping the defect terms $\mathcal{R}_j$ and $\mathcal{R}_j^*$ in \eqref{vtn+1-app} and replacing the numbers $M$ and $P$ by their approximations $M_N$ and $P_N$ defined in \eqref{M_N+P_N}, respectively. Namely, for given $v_n\in S_N$ compute $v_{n+1}\in S_N$ by 
\begin{align}\label{NuSo-NLS}
v^{n+1}=\Phi^n(v^n;M_N,P_N), \quad n=0,1\ldots, L - 1;\quad\mbox{with}\,\,\, v^0=u^0.
\end{align}
Then, replacing $v^n$ and $v^{n+1}$ by $\fe^{-it_n\partial_x^2}u^n$ and $\fe^{-it_{n+1}\partial_x^2}u^n$ in \eqref{NuSo-NLS}, we obtain the numerical scheme \eqref{numerical}--\eqref{psi}.


\subsection{Technical lemmas for analysing the consistency errors}
\label{section:consistency}

In this subsection, we present two technical lemmas, which are used in estimating the defect terms $\mathcal R_j$ and $\mathcal R_j^*$ in the previous subsection. 
\begin{lemma}\label{lem:Rj}
For any given $v\in H^1(\T)$ and $s\in [0,1]$, the following results hold. 
\begin{itemize}
\item[(i)]
Let $m\geq1, N\in \Z^+$. Then, for any $f\in \mathcal T_m(1;v)$ and any $g\in \mathcal T_m(1_{>N};v)$,  
\begin{align*}
\big\|f\big\|_{H^1}
&\lesssim \|v\|_{H^1}^m; \\
\big\|g\big\|_{H^s}
&\lesssim N^{-1+s} \|v\|_{H^1}^m.
\end{align*}
\item[(ii)]
For any $f\in \mathcal T_3(k_2k_3;v)$ there holds 
$$
 |\Pi_0 f | \lesssim \|v\|_{H^1}^3.
$$
\item[(iii)]  
Let $N\in \Z^+$, $N\ge 10$ and $f \in \mathcal T_3(\sigma;v)$. If 
$$
| \sigma( k, k _1,k_2, k _3)| \lesssim |k|^{-1}|k_j|\>1_{0\ne |k|\le N} \, 1_{|k_1+k_2|>N}, 
\quad\mbox{for some $j\in \{1,2,3\}$} , 
$$
then 
$$
 \left\| f \right\|_{H^s}\lesssim N^{-1+s} \|v\|_{H^1}^3. 
$$
\end{itemize}
\end{lemma}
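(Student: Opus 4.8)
The plan is to prove all three parts by working entirely at the level of Fourier coefficients, using the defining inequality \eqref{def:TmM} together with the Plancherel identity and the convolution-based estimates established in Lemma \ref{lem:kato-Ponce} and Lemma \ref{lem:kato-Ponce-1}. The unifying idea is that a function in $\mathcal T_m(\sigma;v)$ has Fourier coefficients dominated by an $m$-fold convolution of $|\hat v|$ weighted by the multiplier $\sigma$, so each statement reduces to bounding a weighted $l^2$ (or $l^\infty$, for the zero mode) norm of such a convolution.

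For part (i), the first bound is the cleanest: since $f\in\mathcal T_m(1;v)$, we have $|\hat f_k|\lesssim \sum_{k_1+\cdots+k_m=k}|\hat w_{k_1}|\cdots|\hat w_{k_m}|$ where $w$ is the function with coefficients $|\hat v_k|$. Thus $|\hat f_k|\lesssim \mathcal F_k[w^m]$, and so $\|f\|_{H^1}\lesssim \|w^m\|_{H^1}$; applying Lemma \ref{lem:kato-Ponce}(i) with $s=1>\frac12$ repeatedly gives $\|w^m\|_{H^1}\lesssim\|w\|_{H^1}^m\sim\|v\|_{H^1}^m$. For the second bound, the extra factor $1_{>N}$ forces $|k|>N$, so on the support of $\hat g_k$ we may insert the gain $\langle k\rangle^{-1+s}\lesssim N^{-1+s}$ freely: $\|g\|_{H^s}^2=\sum_k\langle k\rangle^{2s}|\hat g_k|^2\lesssim N^{-2+2s}\sum_{|k|>N}\langle k\rangle^{2}|\hat g_k|^2\le N^{-2+2s}\|w^m\|_{H^1}^2$, and the same Kato--Ponce argument finishes it.

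Part (ii) concerns only the single coefficient $\Pi_0 f=\hat f_0$, so I need $\sum_{k_1+k_2+k_3=0}|k_2||k_3|\,|\hat v_{k_1}||\hat v_{k_2}||\hat v_{k_3}|\lesssim\|v\|_{H^1}^3$. Writing $k_1=-(k_2+k_3)$, the natural move is to pull out the two derivative-loaded factors: the sum is $\sum_{k_2,k_3}|\hat v_{-k_2-k_3}|\,\big(|k_2||\hat v_{k_2}|\big)\big(|k_3||\hat v_{k_3}|\big)$, which I recognise as (a bound for) the zero Fourier coefficient of $\,\tilde v\cdot\partial_x\tilde v\cdot\partial_x\tilde v\,$ up to conjugation, hence $\lesssim\|\,|\tilde v|\,|\partial_x\tilde v|^2\|_{L^1}\lesssim\|\tilde v\|_{L^\infty}\|\partial_x\tilde v\|_{L^2}^2\lesssim\|v\|_{H^1}^3$ by Hölder and the Sobolev embedding $H^1\hookrightarrow L^\infty$ on $\T$. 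Alternatively one applies Lemma \ref{lem:kato-Ponce-1}(ii) after isolating one derivative via $|k_2|\lesssim\langle k_3\rangle$ on the relevant region; either route is routine.

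Part (iii) is where the real work lies, and I expect it to be the main obstacle. Here the multiplier $|\sigma|\lesssim|k|^{-1}|k_j|\,1_{0\ne|k|\le N}\,1_{|k_1+k_2|>N}$ combines a derivative loss in one slot with a $|k|^{-1}$ gain and, crucially, a high-frequency cutoff on a \emph{partial sum} $k_1+k_2$ rather than on $k$ itself. The plan is to exploit the cutoff $|k_1+k_2|>N$: since $k_3=k-(k_1+k_2)$ and $|k|\le N$, the condition $|k_1+k_2|>N$ forces $|k_3|\gtrsim|k_1+k_2|>N$ (indeed $|k_3|\ge|k_1+k_2|-|k|>0$ and a high frequency must appear among the remaining indices), which supplies the decay needed to convert the analysis into the factor $N^{-1+s}$. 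Concretely I would treat $J^{-1}(\,\cdot\,)$ applied to a product in which one factor carries a derivative $J$ (coming from $|k_j|$) and another factor is frequency-localised to $|\cdot|>N$, and then invoke Lemma \ref{lem:kato-Ponce-1}(i) (for $s>\frac12$) or Lemma \ref{lem:kato-Ponce-1}(ii) (at $s=1$ and, by interpolation or direct estimate, at general $s\in[0,1]$), placing the $N^{-1+s}$ gain on the high-frequency factor exactly as in part (i). The delicate point is the bookkeeping of \emph{which} index is large and \emph{which} slot carries the derivative across the cases $j\in\{1,2,3\}$, and ensuring the $|k|^{-1}$ factor is correctly matched with the output regularity $J^{-1}$ so that no derivative is lost overall; symmetry in the indices (as used in \eqref{est:R4} and \eqref{est:R6}) will let me reduce to a single representative case.
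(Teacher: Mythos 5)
Parts (i) and (ii) of your proposal coincide with the paper's argument: reduce to $|\hat f_k|\lesssim\mathcal F_k[w^m]$ and apply Lemma~\ref{lem:kato-Ponce}(i), and identify $\Pi_0 f$ with (a bound by) $\int_\T w\,(|\nabla|w)^2\,dx$ and use $H^1(\T)\hookrightarrow L^\infty(\T)$. For the second bound in (i) your reading is correct: by \eqref{def-1N} the cutoff $1_{>N}$ acts on the \emph{output} frequency $k$, so inserting $\langle k\rangle^{-1+s}\lesssim N^{-1+s}$ is exactly the paper's step $\|g\|_{H^s}\lesssim N^{-1+s}\|g\|_{H^1}$.

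Part (iii), which you rightly flag as the crux, contains a genuine error. The claim that $|k_1+k_2|>N$ and $|k|\le N$ force $|k_3|\gtrsim|k_1+k_2|>N$ is false: from $k_3=k-(k_1+k_2)$ one only gets $|k_3|\ge|k_1+k_2|-|k|$, which can equal $1$ (take $|k_1+k_2|=N+1$, $|k|=N$). So there is no single high-frequency factor on which to place the $N^{-1+s}$ gain, and the strategy ``exactly as in part (i)'' does not transfer, since here the output frequency satisfies $|k|\le N$ rather than $|k|>N$. The mechanism the paper uses instead is to trade the power of $N$ for one derivative on the \emph{grouped} frequency $k_1+k_2$: since $s\le 1$ and $|k|\le N<|k_1+k_2|$,
\[
\langle k\rangle^{-1+s}\langle k_1\rangle=\langle k\rangle^{-1}\langle k\rangle^{s}\langle k_1\rangle\lesssim N^{-1+s}\,\langle k\rangle^{-1}\langle k_1+k_2\rangle\langle k_1\rangle ,
\]
whence (for $j=1$) $\mathcal F_k[J^sf]\lesssim N^{-1+s}\mathcal F_k\big[J^{-1}\big(v\,J(v\,Jv)\big)\big]$, and Lemma~\ref{lem:kato-Ponce-1}(ii) with the $L^2$ norm placed on $v\,Jv$ and the $H^1$ norm on the remaining factor closes the estimate, $\|J^{-1}(v\,J(vJv))\|_{L^2}\lesssim\|vJv\|_{L^2}\|v\|_{H^1}\lesssim\|v\|_{H^1}^3$. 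Your proposal never identifies this trade, and your symmetry reduction is also too quick: $j=1$ and $j=2$ are interchangeable, but $j=3$ puts the derivative outside the cutoff pair and needs the other branch of Lemma~\ref{lem:kato-Ponce-1}(ii), namely $\|J^{-1}(J(v^2)\,Jv)\|_{L^2}\lesssim\|v^2\|_{H^1}\|Jv\|_{L^2}$.
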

\begin{proof}
Without loss of generality, we can assume that $\hat{v}_{k_j}, j=1,\cdots,m$ are positive for any $t\in [0,T]$. Otherwise we replace $\hat{v}_{k_j}$ by $|\hat{v}_{k_j}|$ as we did in the proof of Lemma \ref{lem:kato-Ponce-1}.

(i) By the definition of $\mathcal T_m(\sigma;v)$ in \eqref{def:TmM}, $f\in \mathcal T_m(1;v)$ implies that 
\begin{align*}
|\hat f_k|\lesssim 
\sum\limits_{k _1+\cdots+ k _m = k}
\>\hat{v}_{ k _1}\cdots \hat v_{ k _m} \sim \mathcal{F}_k[v^m] .
\end{align*}
By Plancherel's identity and Lemma \ref{lem:kato-Ponce} (i), we obtain that 
\begin{align*}
\|f\|_{H^1}
  \lesssim \|v^m\|_{H^1}
\lesssim \|v\|_{H^1}^m.
\end{align*}
For $g \in \mathcal T_m(1_{>N};v)$, we use the inequality $\|g\|_{H^s}
  \lesssim
 N^{-1+s} \|g\|_{H^1}$ together with the inequality above, which implies that 
$\|g\|_{H^1}
\lesssim \|v\|_{H^1}^m$. This yields the desired inequality for $g$, i.e., 
\begin{align*}
\|g\|_{H^s}
\lesssim  N^{-1+s} \|v\|_{H^1}^m .
\end{align*}

(ii) For any $f\in \mathcal T_3(k_2k_3;v)$ we have that 
\begin{align*}
|\Pi_0f|
\lesssim &
\sum\limits_{k _1+k_2+ k _3=0}
\>\hat{v}_{ k _1}(t) \>|k_2|\hat{v}_{ k _2}(t) \>|k_3|\hat{v}_{ k _3}(t) \\
\lesssim & \sum\limits_{k _1+k_1'=0} \hat{v}_{ k _1}(t) \mathcal{F}_{k_1'}[ (|\nabla |v(t))^2 ] \\
\lesssim & 
\int_\T v \left(|\nabla |v\right)^2\,dx 
\lesssim \|v\|_{L^\infty}  \| |\nabla| v\|_{L^2}^2 
\lesssim \|v\|_{H^1}^3.
\end{align*}

(iii) We only consider the case when $j=1$, since the other cases can be treated in the same way.  
Since the Fourier coefficients of $J^sf$ satisfies 
\begin{align*}
\mathcal{F}_k[J^sf]
= \langle k\rangle^{s} \hat f_k  
\lesssim & 
\sum\limits_{{\substack{k_1+k_2+k_3 = k \\ |k_1+k_2|> N}}} 1_{0\ne |k|\le N}
\langle k\rangle^{-1+s} \langle k_1\rangle
\>\hat{v}_{ k _1}(t) \hat{v}_{ k _2}(t)\hat v_{ k_3}(t) \\
\lesssim & 
N^{-1+s} 
\sum\limits_{{\substack{k_1+k_2+k_3 = k \\ |k_1+k_2|> N}}} 
\langle k\rangle^{-1}
\langle k_1+k_2 \rangle\langle k_1\rangle \hat{v}_{ k _1}(t) \hat{v}_{ k _2}(t)\hat v_{ k_3}(t) \\
\lesssim &
N^{-1+s}\mathcal{F}_k[ J^{-1} (v J(vJv)) ] ,
\end{align*}
it follows from Lemma \ref{lem:kato-Ponce-1} (ii) that 
\begin{align*}
\| J^sf\|_{L^2} 
\lesssim &
N^{-1+s}  \|vJv\|_{L^2} \|v\|_{H^1}  
\lesssim 
N^{-1+s}  \|v\|_{L^\infty} \|Jv\|_{L^2} \|v\|_{H^1} 
\lesssim 
N^{-1+s}  \|v\|_{H^1}^3 . 
\end{align*}

This proves the desired results in Lemma \ref{lem:Rj}. 

\end{proof}

\begin{lemma}\label{lem:R5}
If $v\in L^\infty(0,T;H^1(\T))$ then
\begin{align}
\big\|\mathcal R_5\big\|_{L^2}\lesssim \tau^2\sqrt{\ln\tau^{-1}} \|v\|_{L^\infty(0,T;H^1)}^3.\label{R5-L2}
\end{align}
Moreover, for any $s\in (\frac12, 1)$, 
\begin{align}
\big\|\mathcal R_5\big\|_{H^s}\lesssim \tau^\frac32 \|v\|_{L^\infty(0,T;H^1)}^3.\label{R5-s}
\end{align}
\end{lemma}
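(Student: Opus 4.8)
The plan is to reduce both bounds to a single pointwise estimate on the time‑integral multiplier in \eqref{def-R5k} and then carry out two different trilinear summations. Writing $A=kk_1$ and $B=k_2k_3$, the factor $\fe^{it_n\phi}$ is unimodular, so it suffices to control $\int_0^\tau\frac{k_1}{k}(\fe^{2isA}-1)(\fe^{2isB}-1)\,ds$. Using $|\fe^{i\theta}-1|\le\min(2,|\theta|)$, I would bound the first factor trivially by $|\frac{k_1}{k}(\fe^{2isA}-1)|\le\frac{2|k_1|}{|k|}$ and extract all the $\tau$‑smallness from the second, $|\fe^{2isB}-1|\le\min(2,2s|B|)$; integrating gives the master bound
\[
\Big|\int_0^\tau\tfrac{k_1}{k}(\fe^{2isA}-1)(\fe^{2isB}-1)\,ds\Big|\lesssim\frac{|k_1|}{|k|}\,\min\big(\tau,\tau^2|k_2k_3|\big).
\]
Since $\widehat{\mathcal R}_{5,k}=0$ for $k=0$, I may assume $k\ne0$ throughout and, as in the proof of Lemma \ref{lem:kato-Ponce-1}, replace all Fourier coefficients by their absolute values, working with a nonnegative majorant $\tilde v$ of $v$ (and bounding $\|\tilde v\|_{H^1}=\|v(t_n)\|_{H^1}\le\|v\|_{L^\infty(0,T;H^1)}$ at the end).

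For the $H^s$ bound ($s\in(\frac12,1)$) I would trade one power of $\tau$ for half a derivative: using $\min(\tau,\tau^2|k_2k_3|)\le\tau^{3/2}|k_2k_3|^{1/2}$ and $\frac1{|k|}\lesssim\langle k\rangle^{-1}$ gives
\[
\langle k\rangle^s|\widehat{\mathcal R}_{5,k}|\lesssim\tau^{3/2}\sum_{k_1+k_2+k_3=k}\langle k\rangle^{s-1}\,|k_1|\,|k_2|^{1/2}|k_3|^{1/2}\,\hat{\tilde v}_{k_1}\hat{\tilde v}_{k_2}\hat{\tilde v}_{k_3}.
\]
The right‑hand side is the $k$th Fourier coefficient of $J^{s-1}(p\,q\,q)$ with $p=|\partial_x|\tilde v\in L^2$ and $q=|\partial_x|^{1/2}\tilde v\in H^{1/2}$, each of norm $\lesssim\|v\|_{H^1}$. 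I would then estimate $\|J^{s-1}(pqq)\|_{L^2}=\|pqq\|_{H^{s-1}}$ by duality against $\psi\in H^{1-s}$, keeping $p$ in $L^2$ and $qq\psi$ in $L^2$: since $H^{1/2}(\T)\hookrightarrow L^r$ for every $r<\infty$ and $H^{1-s}(\T)\hookrightarrow L^{1/(s-1/2)}$, Hölder places $qq\psi$ in some $L^{r'}$ with $r'>2$, which embeds into $L^2$ on the finite‑measure torus precisely when $s<1$. This yields $\|\mathcal R_5\|_{H^s}\lesssim\tau^{3/2}\|v\|_{H^1}^3$, i.e.\ \eqref{R5-s}.

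For the $L^2$ bound I would instead keep the full second order, writing $\min(\tau,\tau^2|k_2k_3|)=\tau^2\min(\tau^{-1},|k_2k_3|)$ and using $|k_2k_3|=|k_2||k_3|$ to place a full derivative on each of the second and third inputs. The point is that the cap $\min(\tau^{-1},|k_2k_3|)$ forces $|k_2k_3|\lesssim\tau^{-1}$, so the resulting trilinear form — three full derivatives tied together by $J^{-1}$ — is evaluated on a frequency region truncated at $|k_2k_3|\sim\tau^{-1}$. This is exactly the $s=\tfrac12$ endpoint of the Hölder chain above, which fails only logarithmically; running the same duality but now summing over the truncated region (dyadically in $|k_2k_3|$, with Cauchy–Schwarz and Plancherel) converts the borderline sum $\sum_n n^{-1}$ into $\sum_{n\lesssim\tau^{-1}}n^{-1}\sim\ln(\tau^{-1})$, and the $L^2$ square‑root leaves $\sqrt{\ln(\tau^{-1})}$, giving \eqref{R5-L2}.

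The main obstacle is the $L^2$ estimate \eqref{R5-L2}: distributing three full derivatives over three merely‑$H^1$ factors is borderline‑forbidden in $L^2(\T)$, so the whole delicacy is to use the phase truncation $|k_2k_3|\lesssim\tau^{-1}$ to extract exactly a single power of $\sqrt{\ln(\tau^{-1})}$ — not a full logarithm and not a divergence. Getting the near‑orthogonality of the dyadic pieces right, so that the $L^2$‑sum produces only the square root of the logarithm, is the crux; by contrast \eqref{R5-s} is comparatively soft, since the half‑derivative of slack removes the endpoint altogether and no truncation or logarithm is needed.
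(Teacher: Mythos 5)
Your $H^s$ estimate is essentially the paper's own argument (bound the first oscillatory factor by $2$, take half a power of $s$ from the second, land on $\||\nabla|^{-1+s}(|\nabla|\bar v\,(|\nabla|^{1/2}v)^2)\|_{L^2}$ and close with Sobolev/H\"older), and it is fine. The $L^2$ part, however, has a genuine gap, and it is located exactly where you declare the problem to be "borderline logarithmic." Your master bound
\[
\Big|\int_0^\tau\tfrac{k_1}{k}(\fe^{2iskk_1}-1)(\fe^{2isk_2k_3}-1)\,ds\Big|\lesssim\frac{|k_1|}{|k|}\min\big(\tau,\tau^2|k_2k_3|\big)
\]
extracts all smallness from the $k_2k_3$ factor and keeps the unbounded weight $|k_1|/|k|$ untouched. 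This is too lossy in the regime where (after symmetrising to $|k_2|\ge|k_3|$) one has $|k_3|>|k|$ and hence $|k_1|\lesssim|k_2|$: there the paper instead uses $|\fe^{2iskk_1}-1|\le 2s|k||k_1|$ to \emph{cancel} the $|k|^{-1}$, obtaining the multiplier $\tau^2|k_1||k_2|$ with no $|k_3|$-derivative and no $|k|^{-1}$. Your symbol exceeds this by the unbounded factor $|k_3|/|k|$, and the excess is fatal, not logarithmic. Concretely, take $\hat v_j=|j|^{-3/2}/\sqrt{\ln K}$ for $2\le|j|\le K$ and $\hat v_n=|n|^{-3/2}/\sqrt{\ln(M/K)}$ for $2K\le|n|\le M$, with $K=\tau^{-1/4}$ and $M\sim\tau^{-3/4}$ so that $|k_2k_3|\le\tau^{-1}$ on the relevant block and your $\min$ is never active; then $\|v\|_{H^1}\sim 1$, but for output frequencies $1\le|k|\le K^{1/2}$ the block $k_1=n$, $k_2\approx-n$, $k_3=j$ with $|k|<|j|\le K$ contributes $\gtrsim K^{1/2}/(|k|\sqrt{\ln K})$ to your majorant $S_k=\sum\frac{|k_1|}{|k|}\min(\tau^{-1},|k_2k_3|)|\hat v_{k_1}||\hat v_{k_2}||\hat v_{k_3}|$, whence $\sum_k S_k^2\gtrsim K/\ln K=\tau^{-1/4}/\ln K\gg\ln\tau^{-1}$. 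Since you work throughout with the nonnegative majorant, no dyadic summation or orthogonality can recover the claim: the pointwise symbol bound itself is insufficient.

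Two further points of diagnosis. First, the configuration above is not the "$s=\tfrac12$ endpoint of the H\"older chain": placing a full derivative on each of $k_2,k_3$ together with the weight $|k_1|/|k|$ is a strictly more singular trilinear form than $\langle k\rangle^{-1/2}|k_1||k_2|^{1/2}|k_3|^{1/2}$, so the two do not fail "by the same logarithm." Second, in the paper the factor $\sqrt{\ln\tau^{-1}}$ does not come from the truncation $|k_2k_3|\lesssim\tau^{-1}$ at all; it comes from splitting the \emph{output} frequency at $|k|=\tau^{-1}$ (for $|k|>\tau^{-1}$ one simply uses $|k|^{-1}\le\tau$ and an $l^2\times l^1\times l^1$ convolution bound), and then, for $0\ne|k|\le\tau^{-1}$, showing $|\hat{\mathcal R}_{5,k}|\lesssim\tau^2|k|^{-1/2}\|v\|_{H^1}^3$ via the case split $|k_3|\le|k|$ (sum $|k_3||\hat v_{k_3}|$ over $\le 2|k|$ modes, costing $|k|^{1/2}$) versus $|k_3|>|k|$ (sum $|\hat v_{k_3}|$ over $|k_3|>|k|$, gaining $|k|^{-1/2}$), so that $\sum_{0<|k|\le\tau^{-1}}|k|^{-1/2}|\hat f_k|\lesssim\sqrt{\ln\tau^{-1}}\,\|f\|_{L^2}$. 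To repair your proof you need to replace the single master bound by the paper's two-case multiplier estimate \eqref{est:multiplier} (equivalently, decide for each frequency configuration which of the two oscillatory factors to expand) and then run this output-frequency splitting.
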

\begin{proof}
For $k_1+k_2+k_3=k$ and $|k_2|\ge |k_3|$ we claim that the following inequality holds:  
\begin{align}
\left|\frac{k_1}{k}
       \big(\fe^{2iskk_1}-1\big)\big(\fe^{2isk_2k_3}-1\big)\right|
      \lesssim 
     \tau |k|^{-\alpha}  |k_1||k_2| |k_3|^\alpha
     \quad\forall\, s\in [0,\tau],\,\,\,\forall\, \alpha\in [0,1].
  \label{est:multiplier}
\end{align}
In order to prove \eqref{est:multiplier}, we consider the following two cases: $|k|\ge |k_3|$ and $|k|< |k_3|$. 

{\sc Case} 1: $|k|\ge |k_3|$. In this case, we use the following inequalities: 
$$
\big|\fe^{2iskk_1}-1\big|\le 2 
\quad\mbox{and}\quad \big|\fe^{2isk_2k_3}-1\big|\le 2\tau |k_2||k_3| , 
$$
it follows that 
\begin{align*}
\left|\frac{k_1}{k}
       \big(\fe^{2iskk_1}-1\big)\big(\fe^{2isk_2k_3}-1\big)\right|
       \le 4\tau |k|^{-1}|k_1| |k_2||k_3|
\lesssim \tau |k|^{-\alpha}|k_1| |k_2||k_3|^\alpha.
\end{align*}

{\sc Case} 2: $|k|< |k_3|$. In this case $k_1+k_2+k_3=k$ and $|k_2|\ge |k_3|$ imply 
$$
|k_1|\le |k_2|+|k_3|+|k|\lesssim |k_2| .
$$ 
We use the following inequalities:
$$
\big|\fe^{2iskk_1}-1\big|\le 2\tau |k||k_1|
\quad\mbox{and}\quad  
\big|\fe^{2isk_2k_3}-1\big|\le 2.
$$
Then we obtain 
\begin{align*}
\left|\frac{k_1}{k}
       \big(\fe^{2iskk_1}-1\big)\big(\fe^{2isk_2k_3}-1\big)\right|
       \le 4\tau |k_1|^2.
\end{align*}
Since $|k_1|\lesssim |k_2|$, it follows that 
$$
|k_1|^2\lesssim |k_1| |k_2|\lesssim   |k_1| |k_2| \left(\frac{|k_3|}{|k|}\right)^\alpha.
$$
This proves \eqref{est:multiplier}. 

By using the symmetry between $k_2$ and $k_3$ in the expression of $|\hat{\mathcal R}_{5,k}|$ in \eqref{def-R5k}  and applying \eqref{est:multiplier} with $\alpha=1$ in the case $|k|\ge |k_3|$ and $\alpha=0$ in the case $|k|<|k_3|$, we obtain for any $k\ne 0$, 
\begin{align}
|\hat{\mathcal R}_{5,k}| 
\lesssim &\, \tau^2 \sum\limits_{{\substack{k_1+k_2+k_3=k\\ |k_2|\ge |k_3|,  |k|\ge |k_3|}}} 
 |k|^{-1}  |k_1||k_2| |k_3| |\hat{ \bar v}_{k_1}(t_n)||\hat v_{k_2}(t_n)||\hat v_{k_3}(t_n)| \notag\\ 
& + \tau^2 \sum\limits_{{\substack{k_1+k_2+k_3=k\\ |k_2|\ge |k_3|,  |k|< |k_3|}}} 
  |k_1||k_2|  |\hat{ \bar v}_{k_1}(t_n)||\hat v_{k_2}(t_n)||\hat v_{k_3}(t_n)|.\label{R5-k}
\end{align}
Without loss of generality, we may assume that $\hat{ \bar v}_{k_1}(t_n),\hat v_{k_2}(t_n)$ and $\hat v_{k_3}(t_n)$ are nonnegative. Otherwise we replace them by their absolute values as we did in the proof of Lemma \ref{lem:kato-Ponce-1}. 

By the duality between $L^2(\T)$ and itself, it is sufficient to prove the following result to obtain \eqref{R5-L2}: 
\begin{align}\label{desired-R5}
|\langle \mathcal R_5,f\rangle|\lesssim \tau^2 \sqrt{\ln (\tau^{-1})} \|v\|_{L^\infty(0,T;H^1)}^3 \|f\|_{L^2}
\quad\forall\, f\in L^2(\T) . 
\end{align}
From the definition below \eqref{def-R6k} we see that $\mathcal{R}_{5,0}=0$. As a result, we have  
\begin{align}
|\langle  \mathcal R_5,f\rangle| 
\lesssim & \sum\limits_{k\ne 0} |\hat{\mathcal R}_{5,k}| \>|\hat f_k| 
\lesssim \sum\limits_{|k|> \tau^{-1}} |\hat{\mathcal R}_{5,k}| \>|\hat f_k|
+\sum\limits_{0\ne |k|\le \tau^{-1}} |\hat{\mathcal R}_{5,k}| \>|\hat f_k| . \label{R5-1-2}
\end{align}
From the expression of $\mathcal{R}_{5,k}$ in \eqref{def-R5k} we see that for $|k|>\tau^{-1}$ there holds 
$$
|\hat{\mathcal R}_{5,k}|\le \tau^2 \sum\limits_{k_1+k_2+k_3=k} |k_1| \hat{ \bar v}_{k_1}(t_n)\hat v_{k_2}(t_n)\hat v_{k_3}(t_n) .
$$
Hence, by the Cauchy--Schwartz inequality and Plancherel's identity, we have 
\begin{align}
 \sum\limits_{|k|> \tau^{-1}} |\hat{\mathcal R}_{5,k}| \>|\hat f_k|
\le & \tau^2 \sum\limits_{k}  \sum\limits_{k_1+k_2+k_3=k} |k_1| \hat{ \bar v}_{k_1}(t_n)\hat v_{k_2}(t_n)\hat v_{k_3}(t_n)\>|\hat f_k|\notag\\
= & \tau^2  \sum\limits_{k_2,k_3} \sum\limits_{k}  |k-k_2-k_3| \hat{ \bar v}_{k-k_2-k_3}(t_n)\hat v_{k_2}(t_n)\hat v_{k_3}(t_n)\>|\hat f_k|\notag\\
\lesssim & \tau^2\|(\hat f_k)_{k\in\Z}\|_{l^2} \|(k_1 \hat{\bar v}_{k_1}(t_n) )_{k_1\in\Z}\|_{l^2} \|(\hat v_{k_2}(t_n))_{k_2\in\Z}\|_{l^1}\|(\hat v_{k_3}(t_n))_{k_3\in\Z}\|_{l^1}\notag\\
\lesssim & \tau^2 \|f\|_{L^2} \|v\|_{H^1}^3 , \label{R5-HF}
\end{align}
where the last inequality uses the following result: 
$$
\|( \hat v_{k_2}(t_n) )_{k_2\in\Z}\|_{l^1}
\lesssim
\|(\langle k_2\rangle^{-1})_{k_2\in\Z}\|_{l^2} \|(\langle k_2\rangle\hat v_{k_2}(t_n))_{k_2\in\Z}\|_{l^2} 
\lesssim 
\|v\|_{H^1} . 
$$
The second term in \eqref{R5-1-2} can be estimated by using \eqref{R5-k}, i.e., 
\begin{align}
&\hspace{-15pt} \sum\limits_{0\ne |k|\le \tau^{-1}} |\hat{\mathcal R}_{5,k}| \>|\hat f_k| \\
\lesssim & \tau^2\sum\limits_{0\ne |k|\le \tau^{-1}} \sum\limits_{{\substack{k_1+k_2+k_3=k\\ |k_2|\ge |k_3|,  |k|\ge |k_3|}}} 
 |k|^{-1}  |k_1||k_2| |k_3| |\hat f_k| \hat{ \bar v}_{k_1}(t_n)\hat v_{k_2}(t_n)\hat v_{k_3}(t_n) \notag \\
 & + \tau^2 \sum\limits_{0\ne |k|\le  \tau^{-1}} \sum\limits_{{\substack{k_1+k_2+k_3=k\\ |k_2|\ge |k_3|,  |k|< |k_3|}}} 
  |k_1||k_2| |\hat f_k| \hat{ \bar v}_{k_1}(t_n)\hat v_{k_2}(t_n)\hat v_{k_3}(t_n) \notag\\
\lesssim & \tau^2\sum_{0\ne |k|\le \tau^{-1}}  \sum_{|k_3|\le |k|}  \sum_{k_1} 
 |k|^{-1}   |\hat f_k| |k_1|\hat{ \bar v}_{k_1} (t_n)|k-k_1-k_3|\hat v_{k-k_1-k_3}(t_n) |k_3| \hat v_{k_3}  (t_n)\notag\\
 & + \tau^2\sum_{0\ne |k|\le \tau^{-1}}  \sum_{|k_3|> |k|}  \sum_{k_1} 
   |\hat f_k| |k_1|\hat{ \bar v}_{k_1}(t_n)  |k-k_1-k_3| \hat v_{k-k_1-k_3}(t_n)\hat v_{k_3} (t_n)\notag\\
\lesssim & \tau^2\|(k_1\hat v_{k_1}(t_n))_{k_1\in\Z}\|_{l^2}
\|(k_2\hat v_{k_2}(t_n) )_{k_2\in\Z}\|_{l^2} \sum_{0\ne |k|\le \tau^{-1}}  |k|^{-1} |\hat f_k|  \sum_{|k_3|\le |k|} 
|k_3| \hat v_{k_3}(t_n)  \notag\\
&
+ \tau^2\|(k_1\hat v_{k_1}(t_n) )_{k_1\in\Z}\|_{l^2}
\|(k_2\hat v_{k_2}(t_n) )_{k_2\in\Z}\|_{l^2} \sum_{0\ne |k|\le \tau^{-1}}  |\hat f_k|  \sum_{|k_3|> |k|} \hat v_{k_3} (t_n) \notag\\
\lesssim & \tau^2\|v(t_n) \|_{H^1}^2 
\sum_{0\ne |k|\le \tau^{-1}} |k|^{-\frac12} |\hat f_k|  
\|(k_3\hat v_{k_3}(t_n) )_{k_3\in\Z}\|_{l^2}  \notag\\
&
+ \tau^2\|v(t_n) \|_{H^1}^2 \sum_{0\ne |k|\le \tau^{-1}}  |\hat f_k|  \| (\langle k_3 \rangle^{-1})_{|k_3|> k}\|_{l^2} \|( \langle k_3 \rangle\hat v_{k_3}(t_n) )_{|k_3|> k}\|_{l^2}  \notag\\ 
\lesssim & \tau^2\|v(t_n) \|_{H^1}^2 
\sum_{0\ne |k|\le \tau^{-1}} |k|^{-\frac12} |\hat f_k|  
\|(k_3\hat v_{k_3}(t_n) )_{k_3\in\Z}\|_{l^2}  \notag\\
\lesssim & \tau^2\|v(t_n) \|_{H^1}^3 
\|(|k|^{-\frac12})_{0\ne |k|\le \tau^{-1}}\|_{l^2}  \|(\hat f_k)_{|k|\le \tau^{-1}}\|_{l^2} \notag \\
\lesssim & \tau^2\|v(t_n) \|_{H^1}^3 \sqrt{\ln (\tau^{-1})} \|f\|_{L^2}   .
\label{R5-HF2}
\end{align}
Substituting \eqref{R5-HF}--\eqref{R5-HF2} into \eqref{R5-1-2} yields \eqref{desired-R5}, which implies the desired result in \eqref{R5-L2}. 

It remains to prove \eqref{R5-s}. To this end, we use the following inequalities: 
$$
|\fe^{2iskk_1}-1|\le 2\quad\mbox{and}\quad
|\fe^{2isk_2k_3}-1|\lesssim s^{\frac12}||k_2|^\frac12 |k_3|^\frac12 ,
$$
which imply that 
\begin{align*}
\left|\frac{k_1}{k}
       \big(\fe^{2iskk_1}-1\big)\big(\fe^{2isk_2k_3}-1\big)\right|
      \lesssim 
     \tau^\frac12 |k|^{-1}  |k_1||k_2|^\frac12 |k_3|^\frac12 \quad\forall\, s\in[0,\tau].
\end{align*}
By substituting this into the expression of $\hat{\mathcal{R}}_{5,k}$ in \eqref{def-R5k}, and using Plancherel's identity, we obtain   
\begin{align*}
\big\|\mathcal R_5\big\|_{H^s}\lesssim \tau^\frac32 
\left\||\nabla|^{-1+s}\left(|\nabla|\bar v \> \big(|\nabla|^\frac12 v\big)^2\right)\right\|_{L^2}.
\end{align*}
Then using the Sobolev inequality, we get that for any $s\in (\frac12, 1)$, 
\begin{align*}
\big\|\mathcal R_5\big\|_{H^s}
\lesssim & \tau^\frac32 
\left\||\nabla|\bar v \> \big(|\nabla|^\frac12 v\big)^2\right\|_{L^{\frac{2}{3-2s}}}\\
\lesssim & \tau^\frac32 
\big\||\nabla|\bar v \big\|_{L^2}\> \big\| |\nabla|^\frac12 v\big\|_{L^{\frac{2}{1-s}}}^2
\lesssim\tau^\frac32  \|v\|_{H^1}^3.
\end{align*}
This completes the proof of Lemma \ref{lem:R5}. 
\end{proof}

\section{Proof of Theorem \ref{main:thm1}}\label{section:proof}

The proof of Theorem \ref{main:thm1} is divided into two parts. 
In subsection \ref{section:bdH1}, we present an error estimate for the numerical solution in $H^{s}(\T)$ with $s\in(\frac12,1)$, and then use this result to prove the boundedness of the numerical solution in $H^1(\T)$ uniformly with respect to $\tau$ and $N$. 
In subsection \ref{section:error-L2}, we utilize the $H^1$-boundedness of the numerical solution to prove the desired error estimate in $L^2(\T)$. 

\subsection{Boundedness of the numerical solution in $H^1(\T)$}
\label{section:bdH1}

\begin{lemma}\label{lem:Apriori}
Let $u^0\in H^1(\T)$, and let $u^n_{\tau,N}$, $n=0,1,\dots,L$, be the numerical solution given by \eqref{numerical}--\eqref{psi}. Then there exist positive constants $\tau_s$ and $N_s$ such that for $\tau\in(0,\tau_s]$ and $N\ge N_s$ the following error bound holds:  
\begin{equation}\label{AP-2.30}
\max_{0\le n\le L} \|u(t_n,\cdot)-u^n_{\tau,N}\|_{H^{s}} \lesssim_s \tau^{\frac12}+N^{-1+s} 
  \quad\forall\, s\in(\mbox{$\frac12$},1) , 
\end{equation}
where $\tau_s$ and $N_s$ depend only on $\|u^0\|_{H^1}$, $T$ and $s$. 
\end{lemma}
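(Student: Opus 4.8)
The plan is to work entirely with the twisted variable $v(t)=\fe^{-it\partial_x^2}u(t)$ and its numerical counterpart $v^n=\fe^{-it_n\partial_x^2}u^n_{\tau,N}$, for which $\|u(t_n,\cdot)-u^n_{\tau,N}\|_{H^s}=\|v(t_n)-v^n\|_{H^s}=:\|e^n\|_{H^s}$ because $\fe^{it\partial_x^2}$ is an isometry of $H^s$. Subtracting the scheme \eqref{NuSo-NLS} from the exact identity \eqref{vtn+1-app} gives
\begin{align*}
e^{n+1}=\big[\Phi^n(v(t_n);M,P)-\Phi^n(v^n;M_N,P_N)\big]+\mathcal D^n,
\end{align*}
where $\mathcal D^n:=\mathcal R_1+\hat{\mathcal R}_{2,0}+\hat{\mathcal R}_{2,0}^*+\mathcal R_3+\mathcal R_4+\mathcal R_4^*+\mathcal R_5+\mathcal R_5^*$ collects all the dropped defects. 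I would run an induction on $n$ that simultaneously propagates the bound \eqref{AP-2.30} and an a priori bound $\|v^n\|_{H^s}\le R$ with $R$ chosen slightly larger than $\sup_t\|v(t)\|_{H^1}$; this bootstrap is what licenses applying the nonlinear estimates below to the numerical solution, which a priori is only known to lie in $S_N$.

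Bounding the consistency term $\mathcal D^n$ in $H^s$ is then pure bookkeeping of estimates already in hand. By \eqref{est:R1}, Lemma \ref{lem:Rj}(i)--(ii), \eqref{est:R2}, \eqref{est:R3}, \eqref{est:R4}, \eqref{est:R6}, the inclusion $\mathcal R_4^*\in\tau^2\mathcal T_1(1;v(t_n))$ (which gives $O(\tau^2)$ via Lemma \ref{lem:Rj}(i)), and the $H^s$ bound \eqref{R5-s} of Lemma \ref{lem:R5}, every term is $O(\tau^{3/2}+\tau N^{-1+s})$ in $H^s$. Here I would stress that it is the $H^s$ bound \eqref{R5-s}, not the $L^2$ bound of $\mathcal R_5$, that is used, so no logarithmic factor enters at this stage. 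Using $u^0_{\tau,N}=\Pi_N I_{2N}u^0$ one also checks $|M-M_N|+|P-P_N|\lesssim N^{-1}$ and the initial error $\|e^0\|_{H^s}=\|(1-\Pi_N I_{2N})u^0\|_{H^s}\lesssim N^{-1+s}$.

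The heart of the proof, and the step I expect to be the main obstacle, is a one-step stability estimate
\begin{align*}
\|\Phi^n(v(t_n);M,P)-\Phi^n(v^n;M_N,P_N)\|_{H^s}\le(1+C\tau)\|e^n\|_{H^s}+C\tau\big(|M-M_N|+|P-P_N|\big),
\end{align*}
valid whenever $v(t_n),v^n$ lie in a fixed ball of $H^s$. The leading propagator $\fe^{-2i\lambda\tau P\partial_x^{-1}-2i\lambda\tau M}$ has a modulus-one symbol, since $M\in\R$ and $P\in i\R$, so it is unitary on $H^s$ and contributes the bare $\|e^n\|_{H^s}$; the correction $(1-\fe^{-2i\lambda\tau M})\Pi_0$ and the dependence on $(M,P)$ are $O(\tau)$ because $|1-\fe^{-2i\lambda\tau M}|\lesssim\tau$. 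Each nonlinear term of $\Phi^n$ in \eqref{Phi-def} carrying an explicit $\tau$ is Lipschitz with constant $O(\tau)$ by the Kato--Ponce estimates (Lemmas \ref{lem:kato-Ponce} and \ref{lem:kato-Ponce-1}) and the smoothing of $\partial_x^{-1}$. The difficulty lies in the remaining \emph{main} terms, which carry no explicit $\tau$: telescoping the trilinear difference $\hat{\bar f}_{k_1}\hat f_{k_2}\hat f_{k_3}-\hat{\bar g}_{k_1}\hat g_{k_2}\hat g_{k_3}$ to isolate one factor of $\hat e^n$ and recombining the phase operators, they reduce to trilinear multipliers such as $\tfrac{1}{k(k_1+k_3)}(\fe^{it_{n+1}\phi}-\fe^{it_n\phi})$ and $\tfrac{1}{2k^2}(\fe^{i\tau(k^2+k_1^2-(k_2+k_3)^2)}-1)$, none of which is small on its face. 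The $\tau$ must be extracted from the difference itself, writing $\fe^{it_{n+1}\phi}-\fe^{it_n\phi}=\fe^{it_n\phi}(\fe^{i\tau\phi}-1)$ and using $|\fe^{i\tau\phi}-1|\le\min(2,\tau|\phi|)$; invoking the factorisations \eqref{phase-1}--\eqref{phase-2} (so that $\phi=2(k_1+k_2)(k_1+k_3)$ and $k^2+k_1^2-(k_2+k_3)^2=2kk_1$), the multipliers become $\tau$ times ratios like $|k_1+k_2|/|k|$ or $|k_1|/|k|$. These are unbounded only when a high frequency $|k_j|\gg|k|$ is present, and precisely in that regime the smoothing gained from measuring the output in $H^s$ with $s<1$ together with two high-frequency inputs in $H^s$ with $s>\tfrac12$ makes the trilinear form bounded; this is where $s\in(\tfrac12,1)$ is genuinely used, through the same frequency case analysis as in the proofs of Lemma \ref{lem:Rj}(iii) and Lemma \ref{lem:R5}. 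The critical point is that the \emph{full} power $\tau$ is recovered (a fractional power $\tau^\theta$ with $\theta<1$ would be useless, since $(1+C\tau^\theta)^{T/\tau}$ blows up), so that stability holds with constant $1+C\tau$.

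Combining the two estimates yields the recursion
\begin{align*}
\|e^{n+1}\|_{H^s}\le(1+C\tau)\|e^n\|_{H^s}+C\big(\tau^{3/2}+\tau N^{-1+s}\big).
\end{align*}
The discrete Gronwall inequality, with $\|e^0\|_{H^s}\lesssim N^{-1+s}$ and $L\tau=T$, then gives $\max_n\|e^n\|_{H^s}\lesssim \fe^{CT}\big(N^{-1+s}+\tau^{1/2}+N^{-1+s}\big)\lesssim\tau^{1/2}+N^{-1+s}$, which is \eqref{AP-2.30}. Finally the bootstrap closes: since $\|v^{n+1}\|_{H^s}\le\|v(t_{n+1})\|_{H^s}+\|e^{n+1}\|_{H^s}\le\sup_t\|v(t)\|_{H^1}+C(\tau^{1/2}+N^{-1+s})$, choosing $\tau\le\tau_s$ and $N\ge N_s$ keeps $\|v^{n+1}\|_{H^s}\le R$, so the a priori bound is maintained at every step and the induction goes through.
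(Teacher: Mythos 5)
Your proposal is correct and follows essentially the same route as the paper: twisted variable, splitting the one‑step error into the consistency defect $\mathcal{L}^n$ (bounded in $H^s$ via the $\mathcal R_j$ estimates, using the $H^s$ bound \eqref{R5-s} for $\mathcal R_5$) plus a stability difference that telescopes into trilinear multipliers of size $\tau|k_1+k_2|/|k|$ handled by Lemma \ref{lem:kato-Ponce-1}, followed by discrete Gronwall with a bootstrap. The only cosmetic differences are that the paper recovers the explicit factor $\tau$ by rewriting $\Phi^n$ back into the integral form \eqref{expr-Phinf} rather than by your equivalent bound $|\fe^{i\tau\phi}-1|\le\min(2,\tau|\phi|)$, and that the restriction $s<1$ enters through the consistency terms of size $N^{-1+s}$ rather than through the stability estimate.
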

\begin{proof}
Let $v^n=e^{-it_n\partial_x^2}u^n_{\tau,N}$. Then $v^{n+1}=\Phi^n(v^n;M_N,P_N)$ as shown in \eqref{NuSo-NLS}. By using this identity we have 
\begin{align}
v(t_{n+1})-v^{n+1}=&v(t_{n+1})-\Phi^n(v(t_n);M,P)+ \Phi^n(v(t_n);M,P)-\Phi^n(v^n;M_N,P_N) \notag \\
=&\hspace{-3pt}: \mathcal{L}^n+\Phi^n(v(t_n);M,P)-\Phi^n(v^n;M_N,P_N), 
\label{vtn+1-vn+1}
\end{align}
where 
$$
\mathcal{L}^n=v(t_{n+1})-\Phi^n(v(t_n);M,P)
= \mathcal R_{1}+\hat{\mathcal R}_{2,0} +  \hat{\mathcal R}_{2,0}^* +  \hat{\mathcal R}_{3,0}
+ \mathcal R_4 + \mathcal R_4^* + \mathcal R_5 + \mathcal R_5^* , 
$$
which is shown in \eqref{vtn+1-app}. From \eqref{est:R1}, \eqref{est:R2}, \eqref{est:R3}, \eqref{est:R4}, \eqref{est:R5} and \eqref{est:R6} we see that 
\begin{align}\label{est:LHs}
\big\|\mathcal{L}^n\big\|_{H^{s}} \lesssim \tau^\frac32+\tau N^{-1+s} 
\quad\forall\, s\in[0,1) .
\end{align}
Note that the functional $\Phi^n(f;M,P)$ defined in \eqref{Phi-def} can be rewritten into the following  form:
\begin{align}\label{expr-Phinf}
\Phi^n(f;M,P)=f 
&+ \big(\fe^{-2i\lambda\tau P\partial_x^{-1}-2i\lambda\tau M} - 1+ 2i\lambda\tau P\partial_x^{-1}+2i\lambda\tau M\big) f + (1-\fe^{-2i\lambda\tau M}) \Pi_0 f \notag \\[5pt] 
& -i\lambda \tau \Pi_0\big[ \Pi_N\big(\big|\fe^{it_n \partial_{x}^2} f\big|^2\big)\fe^{it_n \partial_{x}^2} f\big]
\notag\\[5pt] 
 & -2i\lambda \sum\limits_{0\ne |k|\le N}\fe^{ikx}
 \bigg( \sum\limits_{{\substack{k_1+k_2+k_3=k\\ |k_2+k_3|\le N}}} \int_0^\tau\frac{k_1+k_2}{k} \fe^{i(t_n+s)\phi}\,ds \bigg) 
    \>\hat{\bar f}_{k_1}\hat f_{k_2}\hat f_{k_3} \notag \\
&+i\lambda \sum\limits_{0\ne |k|\le N}\fe^{ikx}\sum\limits_{{\substack{k_1+k_2+k_3=k\\ |k_2+k_3|\le N}}} \bigg( \int_0^\tau \frac{k_1}{k} \fe^{it_n\phi} \fe^{2iskk_1}\,ds \bigg)
           \>\hat{\bar f}_{k_1}\hat f_{k_2}\hat f_{k_3} \notag \\
    &+ i\lambda \sum\limits_{0\ne |k|\le N}\fe^{ikx}\sum\limits_{{\substack{k_1+k_2+k_3=k\\ |k_2+k_3|\le N}}} \bigg(\int_0^\tau \frac{k_1}{k} \fe^{it_n\phi} \big(\fe^{2isk_2k_3}-1\big)\,ds \bigg) 
           \>\hat{ \bar f}_{k_1}\hat f_{k_2}\hat f_{k_3}  .
\end{align}
For example, the third line of \eqref{expr-Phinf} comes from \eqref{est:v-2-2}, which can be rewritten back into \eqref {v-2-2-1}. This is how we obtain the third line in the expression above. The other terms are obtained similarly. 

From \eqref{expr-Phinf} we furthermore derive that 
\begin{align}\label{Phivtn-Phivn-1}
\Phi^n(v(t_n);M,P)-\Phi^n(v^n;M_N,P_N)
     =&v(t_n)-v^n+\Phi^n_1+\Phi^n_2+\Phi^n_3+\Phi^n_4+\Phi^n_5,
\end{align}
where
\begin{align*}
\Phi^n_1
= 
&\big(\fe^{-2i\lambda\tau P\partial_x^{-1}-2i\lambda\tau M} - 1+ 2i\lambda\tau P\partial_x^{-1}+2i\lambda\tau M + (1-\fe^{-2i\lambda\tau M})\Pi_0 \big) v(t_n) \notag\\ 
&-\big(\fe^{-2i\lambda\tau P_N\partial_x^{-1}-2i\lambda\tau M_N} - 1+ 2i\lambda\tau P_N\partial_x^{-1}+2i\lambda\tau M_N + (1-\fe^{-2i\lambda\tau M_N})\Pi_0 \big) v^n, \\[5pt]
\Phi^n_2=& -i\lambda\tau \Pi_0\Big(|\fe^{it_n \partial_{x}^2}v(t_n)|^2\fe^{it_n \partial_{x}^2}v(t_n) - |\fe^{it_n \partial_{x}^2}v^n|^2\fe^{it_n \partial_{x}^2}v^n \Big) , \\[5pt]
\Phi^n_3=& -2i\lambda  \hspace{-5pt}  \sum\limits_{0\ne |k|\le N}  \hspace{-5pt} \fe^{ikx} \hspace{-10pt} \sum\limits_{{\substack{k_1+k_2+k_3=k\\ |k_2+k_3|\le N}}}   \hspace{-5pt}  \bigg( \int_0^\tau\frac{k_1+k_2}{k} \fe^{i(t_n+s)\phi}\,ds \bigg) \big( \hat{\bar v}_{k_1}(t_n) \hat{v}_{k_2}(t_n)\hat{v}_{k_3}(t_n) -\hat{\bar v}_{k_1}^n \hat{v}_{k_2}^n\hat{v}_{k_3}^n \big) , \\
 \Phi^n_4=&i\lambda \hspace{-5pt} \sum\limits_{0\ne |k|\le N}\fe^{ikx} \hspace{-10pt}\sum\limits_{{\substack{k_1+k_2+k_3=k\\ |k_2+k_3|\le N}}} \hspace{-5pt} \bigg( \int_0^\tau \frac{k_1}{k} \fe^{it_n\phi} \fe^{2iskk_1}\,ds\bigg) 
 \big( \hat{\bar v}_{k_1}(t_n) \hat{v}_{k_2}(t_n)\hat{v}_{k_3}(t_n) -\hat{\bar v}_{k_1}^n \hat{v}_{k_2}^n\hat{v}_{k_3}^n \big) ,  \\
\Phi^n_5=&i\lambda \hspace{-5pt} \sum\limits_{0\ne |k|\le N} \hspace{-5pt} \fe^{ikx}\hspace{-10pt}\sum\limits_{{\substack{k_1+k_2+k_3=k\\ |k_2+k_3|\le N}}} \hspace{-5pt} \bigg( \int_0^\tau \frac{k_1}{k} \fe^{it_n\phi} \big(\fe^{2isk_2k_3}-1\big)\,ds \bigg)
 \big( \hat{\bar v}_{k_1}(t_n) \hat{v}_{k_2}(t_n)\hat{v}_{k_3}(t_n) -\hat{\bar v}_{k_1}^n \hat{v}_{k_2}^n\hat{v}_{k_3}^n \big) .         
\end{align*}

Note that $P$, $M$, $P_N$ and $M_N$ defined in \eqref{M+P} and \eqref{M_N+P_N} are all bounded numbers, with bounds depending on $\|u^0\|_{H^1}$. In particular, 
\begin{align}\label{M-M_N-error}
|M-M_N| 
&=
\bigg| \frac{1}{2\pi} \int_{\T} (|u^0|^2 -|u^0_{\tau,N}|^2) \,d x \bigg| \notag \\
&\lesssim
\bigg| \frac{1}{2\pi} \int_{\T} 
\Big[ (u^0-u_{\tau,N}^0)\overline{u^0} 
+u_{\tau,N}^0\overline{(u^0-u_{\tau,N}^0)} \Big] \,d x \bigg| \notag \\
&\lesssim 
\|u^0-u_{\tau,N}^0\|_{L^2} (\|u^0\|_{L^2}+\|u_{\tau,N}^0\|_{L^2}) \notag \\
&\lesssim 
N^{-1} \|u^0\|_{H^1}^2 
\end{align}
and
\begin{align}\label{P-P_N-error}
|P-P_N| 
&=
\bigg| \frac{1}{2\pi} \int_{\T} (u^0 \partial_x\overline{u^0}-u^0_{\tau,N} \partial_x\overline{u}_{\tau,N}^0) \,d x \bigg| \notag \\
&\lesssim
\bigg| \frac{1}{2\pi} \int_{\T} 
\Big[ (u^0-u_{\tau,N}^0) \partial_x\overline{u^0}
+ u^0_{\tau,N} \partial_x\overline{(u^0-u_{\tau,N}^0)} \Big] \,d x \bigg| \notag \\
&=
\bigg| \frac{1}{2\pi} \int_{\T} \Big[(u^0-u_{\tau,N}^0) \partial_x\overline{u^0}
-  \partial_x u^0_{\tau,N}\overline{(u^0-u_{\tau,N}^0)} \Big]\,d x \bigg| \notag \\
&\lesssim
\|u^0-u_{\tau,N}^0\|_{L^2} (\|\partial_x u^0\|_{L^2}+\|\partial_x u^0_{\tau,N}\|_{L^2}) \notag \\
&\lesssim 
N^{-1} \|u^0\|_{H^1}^2 . 
\end{align}
From the expression of $\Phi_1^n$ we see that its Fourier coefficients can be written as
\begin{align*}
\mathcal{F}_k[\Phi_1^n]
= F(M,P;k)\hat v_k(t_n) - F(M_N,P_N;k) \hat v_k^n ,
\end{align*}
with 
\begin{align*}
F(M,P;k)
:=
\fe^{-2i\lambda\tau Pk^{-1}1_{k\ne 0} -2i\lambda\tau M} - 1+ 2i\lambda\tau Pk^{-1} 1_{k\ne 0}
+2i\lambda\tau M + (1-\fe^{-2i\lambda\tau M})1_{k=0} .
\end{align*}
By using Taylor's expansion and mean value theorem, it is straightforward to verify that 
$$
|F(M,P;k) -F(M_N,P_N;k)|
\lesssim 
\tau (|P-P_N|+|M-M_N|) . 
$$
As a result, we have 
\begin{align}\label{Phin1-Estimate}
\|\Phi_1^n \|_{H^s}
\lesssim & 
\| (\langle k \rangle^s \mathcal{F}_k[\Phi_1^n])_{k\in\Z} \|_{l^2} \notag \\
\lesssim & 
\tau(|P-P_N|+|M-M_N|) 
\| (\langle k \rangle^s \hat v_k(t_n))_{k\in\Z} \|_{l^2} 
+ \| (\langle k \rangle^s (\hat v_k(t_n)-\hat v_k))_{k\in\Z} \|_{l^2} \notag \\
\lesssim &
\tau(|P-P_N|+|M-M_N|) \|v(t_n)\|_{H^s}
+ \tau \|v(t_n)-v^n\|_{H^s} \notag \\
\lesssim &
\tau N^{-1} \|v\|_{L^\infty(0,T;H^1)}^3
+ \tau \|v(t_n)-v^n\|_{H^s} ,
\end{align}
where the last inequality follows from \eqref{M-M_N-error}--\eqref{P-P_N-error}.

Since $\Phi_2^n$ is a constant, it is straightforward to show that (similarly as \eqref{M-M_N-error}) 
\begin{align}\label{Phin2-Estimate}
|\Phi^n_2|
\lesssim &
\tau\big( \|v^n-v(t_n)\|_{L^2}(\|e^{it_n\partial_x^2}v(t_n)\|_{L^\infty}^2+\|e^{it_n\partial_x^2}v^n\|_{L^\infty}^2) \notag \\ 
\lesssim &
 \tau\big( \|v^n-v(t_n)\|_{L^2}(\|v(t_n)\|_{H^{s}}^2+\|v^n\|_{H^{s}}^2) \qquad\mbox{(this holds for $s>\frac12$)} \notag \\
 \lesssim &
 \tau\big( \|v^n-v(t_n)\|_{L^2}(\|v(t_n)\|_{H^{s}}^2+\|v^n-v(t_n)\|_{H^{s}}^2) .
\end{align} 

Similarly,  $\Phi^n_3$ can be decomposed into several functions of the following form: 
\begin{align*}
\Phi^n_3=& -2i  \sum\limits_{0\ne |k|\le N}\fe^{ikx}\sum\limits_{{\substack{k_1+k_2+k_3=k\\ |k_2+k_3|\le N}}} \bigg( \int_0^\tau\frac{k_1+k_2}{k} \fe^{i(t_n+s)\phi}\,ds \bigg) 
\hat{f}_{1,k_1}\hat{f}_{2,k_2}\hat{f}_{3, k_3},
\end{align*}
where $\hat f_{j,k}$ denotes the $k$th Fourier coefficient of the functions $f_j$, 
and one of the three functions $f_j, j=1,2,3$, is 
$$
v^n-v(t_n) \,\,\, \mbox{or its conjugate} ; 
$$
the other two of the three functions $f_j, j=1,2,3$, are either $v^n$ or $v(t_n)$ or their conjugates. We assume that $\hat f_{j,k}, k\in\Z$ are nonnegative; otherwise we consider functions with Fourier coefficients $|\hat f_{j,k}|$ as we did in the proof of Lemma \ref{lem:kato-Ponce-1}  (ii).
Then 
\begin{align*}
\big|\big(\widehat{\Phi^n_3}\big)_k\big|\lesssim &\tau \sum\limits_{k_1+k_2+k_3=k}  \frac{|k_1+k_2|}{|k|}  \>\hat{f}_{1,k_1}\hat{f}_{2,k_2}\hat{f}_{3, k_3}
=\mathcal{F}_k [ \tau J^{-1}(f_1J(f_2f_3) )] .
\end{align*}
As a result, by Plancherel's identity and Lemma \ref{lem:kato-Ponce-1} (i), we have 
\begin{align}\label{Phin3-Estimate}
\|\Phi^n_3\|_{H^{s}}
\lesssim & 
\|\tau J^{-1}(f_3J(f_1f_2) )\|_{H^{s}} \notag \\
\lesssim &\tau 
\|f_3\|_{H^{s}}\|f_1f_2\|_{H^{s}}  \qquad\mbox{(this requires $s>\frac12$)} \notag \\
\lesssim &\tau 
\|f_3\|_{H^{s}}\|f_1\|_{H^{s}}\|f_2\|_{H^{s}} \notag \\
\lesssim & 
\tau \|v^n-v(t_n)\|_{H^{s}} (\|v^n\|_{H^{s}}^2 + \|v(t_n)\|_{H^{s}}^2) \notag \\
\lesssim & 
\tau \|v^n-v(t_n)\|_{H^{s}} (\|v^n-v(t_n)\|_{H^{s}}^2 + \|v(t_n)\|_{H^{s}}^2).  
\end{align}
$\Phi_4^n$ and $\Phi_5^n$ can be estimated similarly, i.e.,
\begin{align*}
\|\Phi^n_4\|_{H^{s}} + \|\Phi^n_5\|_{H^{s}}
\lesssim & 
\tau \|v^n-v(t_n)\|_{H^{s}} (\|v^n-v(t_n)\|_{H^{s}}^2 + \|v(t_n)\|_{H^{s}}^2) .  
\end{align*}
Hence, combining with the estimates of $\Phi^n_j$, $j=1,\dots,5$, we have   
\begin{align*}
&\|\Phi^n(v(t_n);M,P)-\Phi^n(v^n;M_N,P_N)\|_{H^{s}} \\ 
&\le  (1+C\tau) \|v^n-v(t_n)\|_{H^{s}}+C\tau\|v^n-v(t_n)\|_{H^{s}}^3
+C\tau N^{-1} ,
\end{align*}
which holds for any given $s\in(\frac12,1)$. 
Substituting this and \eqref{est:LHs} into \eqref{vtn+1-vn+1} yields that 
\begin{align*}
\|v(t_{n+1})-v^{n+1}\|_{H^{s}}
   \leq & C\big(\tau^\frac32+\tau N^{-1+s}\big)+(1+C\tau) \|v^n-v(t_n)\|_{H^{s}}
        +C\tau\|v^n-v(t_n)\|_{H^{s}}^3.
\end{align*}
By using the discrete Gronwall's inequality with induction assumption on $\|v^n-v(t_n)\|_{H^{s}}\le 1$, we obtain (for sufficiently small $\tau$)
\begin{align*}
\max_{0\le n\le L}\big\|v(t_{n})-v^{n}\big\|_{H^{s}}
\lesssim \tau^\frac12+ N^{-1+s} .
\end{align*}
This proves the desired result in Lemma \ref{lem:Apriori}. 
\end{proof}

Lemma \ref{lem:Apriori} implies that $\|v(t_n)-v^n\|_{H^{s}} \lesssim 1$. Then, by using the triangle inequality and boundedness of the exact solution in $H^1$, we have  
$$
\|v^n\|_{H^s} \lesssim 
\|v(t_n)-v^n\|_{H^s} + \|v(t_n)\|_{H^s} \lesssim 1 .
$$
This result can be furthermore improved to the $H^1$ norm, as shown in the following lemma. 
\begin{lemma}
Let $u^0\in H^1(\T)$, and let $u^n_{\tau,N}$, $n=0,1,\dots,L$, be the numerical solution given by \eqref{numerical}--\eqref{psi}. Then there exists a constant $\tau_0>0$ such that for $\tau\in(0,\tau_0]$ the following estimate holds:  
\begin{align}
\max_{0\le n\le L} \|u^n_{\tau,N}\|_{H^1}
\lesssim 1.
\end{align}
\end{lemma}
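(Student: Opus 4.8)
The plan is to bootstrap from the $H^s$-bound of Lemma \ref{lem:Apriori} to a full $H^1$-bound by estimating the scheme directly in $H^1$ and closing a \emph{linear} Gronwall recursion. I would again pass to the twisted variable $v^n=\fe^{-it_n\partial_x^2}u^n_{\tau,N}$, which satisfies $\|v^n\|_{H^1}=\|u^n_{\tau,N}\|_{H^1}$ and $v^{n+1}=\Phi^n(v^n;M_N,P_N)$. The crucial input is that Lemma \ref{lem:Apriori} together with the $H^1$-boundedness of the exact solution already yields $\|v^n\|_{H^s}\lesssim 1$ for some fixed $s\in(\frac12,1)$, and hence the $L^\infty$-control $\|v^n\|_{L^\infty}\lesssim\|v^n\|_{H^s}\lesssim 1$, which also persists for the factors $\fe^{\pm it_n\partial_x^2}v^n$ since $\fe^{\pm it_n\partial_x^2}$ is an isometry on every $H^s$. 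This $L^\infty$-bound is what keeps each cubic term linear in the quantity $\|v^n\|_{H^1}$ we are trying to control.

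I would start from the expansion \eqref{expr-Phinf} of $\Phi^n(f;M,P)$, with $M,P$ replaced by $M_N,P_N$. The leading term $f$ contributes exactly $\|v^n\|_{H^1}$. For the linear corrections on the first two lines, a Taylor expansion of the exponential shows that the Fourier multiplier $\fe^{-2i\lambda\tau P_N(ik)^{-1}-2i\lambda\tau M_N}-1+2i\lambda\tau P_N(ik)^{-1}+2i\lambda\tau M_N$ is uniformly $O(\tau^2)$ (using $|P_N(ik)^{-1}|\le|P_N|$ for $k\ne0$, while the multiplier at $k=0$ equals $\fe^{-2i\lambda\tau M_N}-1+2i\lambda\tau M_N=O(\tau^2)$), and $(1-\fe^{-2i\lambda\tau M_N})\Pi_0$ has operator norm $O(\tau)$; together they contribute $O(\tau)\|v^n\|_{H^1}$. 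The zero-mode cubic term $-i\lambda\tau\Pi_0[\Pi_N(|\fe^{it_n\partial_x^2}f|^2)\fe^{it_n\partial_x^2}f]$ is a constant whose modulus is $\lesssim\tau\|v^n\|_{H^s}^3\lesssim\tau$, so it enters only as an additive $O(\tau)$.

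The main point is the three genuinely nonlinear sums on the last three lines of \eqref{expr-Phinf}, each of the schematic form $\sum_{0\ne|k|\le N}\fe^{ikx}\,m(k,k_1,k_2,k_3)\,\hat{\bar f}_{k_1}\hat f_{k_2}\hat f_{k_3}$ where $f=v^n$. Bounding the time integrals crudely (in particular using $|\fe^{2iskk_1}-1|\le2$ and $|\fe^{2isk_2k_3}-1|\le2$ whenever an oscillatory factor appears, since here I need only boundedness and not extra decay), the multiplier modulus is controlled by $\tau|k|^{-1}(|k_1|+|k_2|)$ or $\tau|k|^{-1}|k_1|$. Because $\langle k\rangle\lesssim|k|$ for $k\ne0$, multiplying by $\langle k\rangle$ leaves at most a \emph{single} frequency factor, say $|k_1|$, acting as one derivative on one factor. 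Estimating the output in $H^1$ by Plancherel's identity and $\|ab\|_{L^2}\le\|a\|_{L^2}\|b\|_{L^\infty}$, I would place the $H^1$-norm on the differentiated factor and the $L^\infty$-norm (bounded through $H^s$) on the other two, obtaining $\lesssim\tau\|v^n\|_{H^1}\|v^n\|_{H^s}^2\lesssim\tau\|v^n\|_{H^1}$ for each such term; equivalently one may invoke the $J^{-1}(Jf\cdot g)$ estimates of Lemma \ref{lem:kato-Ponce-1}.

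Collecting the estimates gives the recursion $\|v^{n+1}\|_{H^1}\le(1+C\tau)\|v^n\|_{H^1}+C\tau$, after which the discrete Gronwall inequality, together with $\|v^0\|_{H^1}=\|\Pi_N I_{2N}u^0\|_{H^1}\lesssim\|u^0\|_{H^1}$ (stability of trigonometric interpolation on $H^1\subset H^s$, $s>\frac12$), yields $\max_{0\le n\le L}\|v^n\|_{H^1}\lesssim 1$, which is the claim. I expect the only real obstacle to be the bookkeeping certifying that each nonlinear multiplier carries at most one derivative on a single factor: this is precisely what lets the already-established $H^s$-bound absorb two of the three factors and thereby linearize the cubic nonlinearity — without it the natural estimate produces a $\tau\|v^n\|_{H^1}^3$ term and the Gronwall argument fails to close.
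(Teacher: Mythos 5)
Your proposal is correct and follows essentially the same route as the paper: pass to the twisted variable, use the $H^s$-bound ($s>\tfrac12$) from Lemma \ref{lem:Apriori} to control two of the three factors in each cubic term and thereby obtain $\|\Phi^n(v^n;M_N,P_N)\|_{H^1}\le \|v^n\|_{H^1}+C\tau\|v^n\|_{H^1}+C\tau\|v^n\|_{H^1}\|v^n\|_{H^s}^2$, then close a linear Gronwall recursion. The paper states this estimate without detail, whereas you supply the multiplier bookkeeping (one derivative landing on a single factor, the other two absorbed in $L^\infty$ via $H^s$), which is exactly the justification the paper's one-line claim implicitly relies on.
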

\begin{proof}
Let $v^n=e^{-it_n\partial_x^2}u^n_{\tau,N}$.  
By using the expression of $\Phi^n$ in \eqref{expr-Phinf}, we immediately obtain that 
\begin{align}
\|\Phi^n(v^n;M_N,P_N)\|_{H^1}
&\le
\|v^n\|_{H^1} + C\tau \|v^n\|_{H^1} 
+ C\tau \|v^n\|_{H^1} \|v^n\|_{H^s}^2 ,
\end{align}
which holds for any fixed $s\in(\frac12,1)$. Since $\|v^n\|_{H^s}\lesssim 1$ is already proved in Lemma \ref{lem:Apriori}, substituting this into \eqref{NuSo-NLS} yields
\begin{align}
\|v^{n+1} \|_{H^1}
\le
\|v^n\|_{H^1} + C\tau \|v^n\|_{H^1} ,
\end{align}
which implies $\max\limits_{0\le n\le L} \|v^n\|_{H^1}
\lesssim 1$ after iteration in $n$. 
The desired result follows from the relation $\|v^n\|_{H^1}=\|u^n_{\tau,N}\|_{H^1}$. 
\end{proof}

\subsection{Error estimation in $L^2(\T)$}
\label{section:error-L2}

From \eqref{est:R1}, \eqref{est:R2}, \eqref{est:R3}, \eqref{est:R4}, \eqref{est:R5} and \eqref{est:R6} we conclude that 
\begin{align}\label{est:L}
\big\|\mathcal{L}^n\big\|_{L^2} &\le C\big(\tau^2\sqrt{\ln\tau^{-1}}+\tau N^{-1}\big) .
\end{align}
By choosing $s=0$ in \eqref{Phin1-Estimate} and choosing a fixed $s\in(\frac12,1)$ in \eqref{Phin2-Estimate}, we have 
\begin{align*} 
\|\Phi^n_1\|_{L^2}+\|\Phi^n_2\|_{L^2}
\lesssim & 
\tau N^{-1}+ \tau \|v^n-v(t_n)\|_{L^2} .  
\end{align*} 
Instead of \eqref{Phin3-Estimate}, we need to use the following estimate for $\Phi_3^n$:
\begin{align*} 
\|\Phi^n_3\|_{L^2}
\lesssim & 
\|\tau J^{-1}(f_3J(f_1f_2) )\|_{L^2} 
\lesssim \tau 
\min( \|f_3\|_{H^{1}}\|f_1f_2\|_{L^2} ,\|f_3\|_{L^2}\|f_1f_2\|_{H^1} ) .  
\end{align*}
which is a consequence of Lemma \ref{lem:kato-Ponce-1} (ii). 
Recall that one of the three functions $f_j, j=1,2,3$, is 
$v^n-v(t_n)$ or its conjugate, and the other two functions are either $v^n$ or $v(t_n)$ (or their conjugates). If $f_1$ is $v^n-v(t_n)$ or its conjugate, then we choose $L^2$ norm on $f_1$; otherwise we choose $L^2$ norm on $f_2f_3$. In either case we obtain 
\begin{align*}
\|\Phi^n_3\|_{L^2}
\lesssim & 
\tau \|v^n-v(t_n)\|_{L^2} (\|v(t_n)\|_{H^{1}}^2 +\|v^n\|_{H^{1}}^2 ) 
\lesssim \tau \|v^n-v(t_n)\|_{L^2}.
\end{align*}
The two terms $\Phi_4^n$ and $\Phi_5^n$ can be estimated similarly, i.e.,
\begin{align*}
\|\Phi^n_4\|_{L^2} + \|\Phi^n_5\|_{L^2}
\lesssim \tau \|v^n-v(t_n)\|_{L^2}. 
\end{align*}
Substituting the estimates of $\|\Phi^n_j\|_{L^2}$, $j=1,\dots,5$, into \eqref{Phivtn-Phivn-1}, we have 
\begin{align*}
\|\Phi^n(v(t_n);M,P)-\Phi^n(v^{n};M_N,P_N)\|_{L^2}
\lesssim \tau N^{-1}+ \tau \|v^n-v(t_n)\|_{L^2} .
\end{align*}
Then, substituting this into \eqref{vtn+1-vn+1} and using estimate \eqref{est:L}, we obtain 
\begin{align}
\|v(t_{n+1})-v^{n+1}\|_{L^2}
\le C\big(\tau^2\sqrt{\ln\tau^{-1}}+\tau N^{-1}\big)+(1+C\tau) \|v^n-v(t_n)\|_{L^2}. 
\end{align}
Iterating this inequality yields 
\begin{align*}
\max_{1\le n\le L} \|v(t_{n})-v^{n}\|_{L^2}
\lesssim \|v(t_{0})-v^{0}\|_{L^2}
+ \tau \sqrt{\ln\tau^{-1}}+ N^{-1} 
\lesssim \tau \sqrt{\ln\tau^{-1}}+ N^{-1} .
\end{align*}
This completes the proof of Theorem \ref{main:thm1} in view of $ \|v(t_{n})-v^{n}\|_{L^2}= \|u(t_{n})-u^{n}_{\tau,N}\|_{L^2}$.
\qed

\section{Numerical experiments} \label{sec:numerical}

In this section we present numerical experiments to support the theoretical analysis presented in Theorem \ref{main:thm1}. 
We consider the NLS equation \eqref{model} with $\lambda=-1$ and initial value 
\begin{align}\label{initial-u0}
u^0(x) = \frac{1}{10} \sum_{0\ne k\in\Z} |k|^{-0.51-\alpha} e^{ikx} ,
\end{align}
which satisfies that $u^0\in H^{\alpha}(\T)$ and $u^0\notin H^{\alpha-0.01}(\T)$. 

We solve the problem by the proposed method \eqref{numerical}--\eqref{psi} for $\alpha=2$ and $\alpha=1$, respectively, and present the time discretisation errors $\|u_{\tau,N} - u_{\tau/2,N}\|_{L^2}$ in Tables \ref{Table1}--\ref{Table2} for several sufficiently large $N$. From the numerical results we can see that the error from spatial discretisation is negligibly small in observing the temporal convergence rates, i.e., almost first-order convergent as $\tau\rightarrow 0$. This is consistent with the theoretical result proved in Theorem \ref{main:thm1}. 

\begin{table}[htp]\centering\small
\caption{\small Temporal discretisation error $\|u_{\tau,N} - u_{\tau/2,N}\|_{L^2}$ at $T=1$\\ 
\indent\hspace{48pt}
with $\alpha=2$ in \eqref{initial-u0} (for $H^2$ initial data). }\vspace{-5pt}
\centering
\setlength{\tabcolsep}{4.0mm}{
\begin{tabular}{cccccc}
\toprule
         &&         $N=2^{8}$&         $N=2^{9}$&         $N=2^{10}$&      \\[2pt]
\midrule
$\tau= 2^{-6}$           &&      7.662E--06&     7.662E--06&      7.662E--06&   \\[2pt]
$\tau= 2^{-7}$           &&      3.829E--06&     3.829E--06&      3.829E--06&     \\[2pt]
$\tau= 2^{-8}$   &&              1.915E--06&     1.915E--06&      1.915E--06&    \\[2pt]
\midrule
convergence rate && $O(\tau^{1.00})$& $O(\tau^{1.00})$& $O(\tau^{1.00})$  \\[2pt]
\bottomrule
\end{tabular}}
\label{Table1}
\vspace{20pt} 
\caption{\small Temporal discretisation error $\|u_{\tau,N} - u_{\tau/2,N}\|_{L^2}$ at $T=1$\\ 
\indent\hspace{48pt}
with $\alpha=1$ in \eqref{initial-u0} (for $H^1$ initial data). }\vspace{-5pt}
\centering
\setlength{\tabcolsep}{4.0mm}{
\begin{tabular}{cccccc}
\toprule
        &&         $N=2^{8}$&         $N=2^{9}$&         $N=2^{10}$&      \\[2pt]
\midrule
$\tau= 2^{-6}$           &&     2.144E--05&    2.146E--05&       2.146E--05&     \\[2pt]
$\tau= 2^{-7}$           &&     1.023E--05&    1.021E--05&       1.021E--05&     \\[2pt]
$\tau= 2^{-8}$           &&     5.057E--06&    5.067E--06&       5.066E--06&     \\[2pt]
\midrule
convergence rate && $O(\tau^{1.02})$& $O(\tau^{1.02})$& $O(\tau^{1.02})$  \\[2pt]
\bottomrule
\end{tabular}}
\label{Table2}
\end{table}

We present the spatial discretisation errors $\|u_{\tau,N} - u_{\tau,2N}\|_{L^2}$ for $\alpha=2$ and $\alpha=1$ in Tables \ref{Table3}--\ref{Table4} for several sufficiently small stepsize $\tau$. 
From the numerical results we can see that the error from temporal discretisation is negligibly small in observing the spatial convergence rates, i.e., $\alpha$th-order convergence for $H^\alpha$ initial data. This is consistent with the result proved in Theorem \ref{main:thm1} and the comments in Remark \ref{Remark}. 

\begin{table}[htp]\centering\small
\vspace{5pt}
\caption{\small Spatial discretisation error $\|u_{\tau,N} - u_{\tau,2N}\|_{L^2}$ at $T=1$\\ 
\indent\hspace{48pt}
with $\alpha=2$ in \eqref{initial-u0} (for $H^2$ initial data). }\vspace{-5pt}
\centering
\setlength{\tabcolsep}{4.0mm}{
\begin{tabular}{cccccc}
\toprule
           &&         $\tau = 2^{-8}$&         $\tau = 2^{-9}$&         $\tau = 2^{-10}$&      \\[2pt]
\midrule
$N=16$           &&      2.430E--04&     2.430E--03&     2.430E--03&   \\[2pt]
$N=32$           &&      6.237E--05&     6.237E--05&     6.237E--05&    \\[2pt]
$N=64$           &&      1.574E--05&     1.574E--05&     1.574E--05&    \\[2pt]
\midrule
convergence rate && $O(N^{-1.99})$& $O(N^{-1.99})$& $O(N^{-1.99})$  \\[2pt]
\bottomrule
\end{tabular}}
\label{Table3}
\vspace{20pt}
\caption{\small Spatial discretisation error $\|u_{\tau,N} - u_{\tau,2N}\|_{L^2}$ at $T=1$\\ 
\indent\hspace{48pt}
with $\alpha=1$ in \eqref{initial-u0} (for $H^1$ initial data). }\vspace{-5pt}
\centering
\setlength{\tabcolsep}{4.0mm}{
\begin{tabular}{cccccc}
\toprule
           &&         $\tau = 2^{-8}$&         $\tau = 2^{-9}$&         $\tau = 2^{-10}$&      \\[2pt]
\midrule
$N=16$           &&      5.056E--03&     5.056E--03&      5.056E--03&   \\[2pt]
$N=32$           &&      2.559E--03&     2.559E--03&      2.559E--03&     \\[2pt]
$N=64$           &&      1.283E--03&     1.283E--03&      1.283E--03&    \\[2pt]
\midrule
convergence rate && $O(N^{-1.00})$& $O(N^{-1.00})$& $O(N^{-1.00})$  \\[2pt]
\bottomrule
\end{tabular}}
\label{Table4}
\end{table}

\section{Conclusion} \label{sec:conclusion}
 
We have constructed a fast fully discrete low-regularity integrator for solving the NLS equation with nonsmooth initial data in one dimension. 
The method can be implemented by using FFT with $O(N\ln N)$ operations at every time level, and is proved to have an error bound of $O(\tau\sqrt{\ln(1/\tau)}+N^{-1})$ when the initial data is in $H^1(\T)$. 
For initial data in $H^s(\T)$ with $s>1$, the numerical results show that the proposed method can have an error bound of $O(\tau+N^{-s})$. 
We expect that the techniques for constructing and analysing the spatial discretisation method in combination with the temporal low-regularity integrator may also be extended to other dispersive equations with nonsmooth data.

\vskip 25pt

\bibliographystyle{model1-num-names}

\end{document}